\documentclass[reqno]{amsart}

\textwidth16cm \textheight20cm \oddsidemargin-0.1cm
\evensidemargin-0.1cm

\usepackage[utf8]{inputenc}
\usepackage[T1]{fontenc}
\usepackage{amsthm}
\usepackage{amsmath}
\usepackage{amssymb}
\usepackage{comment}
\usepackage[inline]{enumitem}
\usepackage[dvips]{graphicx}
\usepackage{color}
\usepackage{hyperref}
\usepackage{url}
\usepackage{fancyhdr}
\usepackage{mathrsfs}
\usepackage{stmaryrd}
\usepackage[normalem]{ulem}

\newtheorem{theorem}{Theorem}[section]
\newtheorem{lemma}[theorem]{Lemma}
\newtheorem{proposition}[theorem]{Proposition}

\theoremstyle{definition}

\theoremstyle{remark}

\newtheorem*{claim*}{\textsc{Claim}}

\pagestyle{fancy}
\fancyhf{}
\fancyhead[CO]{On the Density of Sumsets}
\fancyhead[CE]{Paolo \textsc{Leonetti} and Salvatore \textsc{Tringali}}
\fancyhead[RO,LE]{\thepage}

\setlength{\headheight}{14.0pt}

\hypersetup{
    pdftitle={On the density of sumsets},
    pdfauthor={Paolo Leonetti and Salvatore Tringali},
    pdffitwindow=true,
    pdfstartview=FitH,
    colorlinks=true,
    linkcolor=blue,
    citecolor=green,
    urlcolor=cyan
}

\providecommand{\dom}{{\rm dom}}
\providecommand{\HHb}{\mathbf{H}}

\providecommand{\NNb}{\mathbf{N}}

\providecommand{\RRb}{\mathbf{R}}

\providecommand\llb{\llbracket}
\providecommand\rrb{\rrbracket}

\renewcommand{\bf}{\mathbf}
\renewcommand{\emptyset}{\varnothing}
\renewcommand{\rho}{\varrho}
\renewcommand{\emph}[1]{\textsf{#1}}

\hyphenation{
  ei-gen-value ei-gen-values ei-gen-prob-blem ei-gen-prob-blems ei-gen-space
  ei-gen-spaces e-ven-tu-al-ly in-e-qual-i-ty ab-so-lute-ly ex-ten-sion lin-e-ar
  ses-qui-lin-e-ar con-cen-trat-ed in-jec-ti-ve e-quiv-a-lence sub-space
  ex-trac-tion ul-ti-mate-ly func-tions e-quiv-a-lent ho-mo-mor-phism
  sem-i-val-u-at-ed sem-i-val-u-a-tion val-u-at-ed val-u-a-tion
  ul-tra-sem-i-val-u-at-ed ul-tra-sem-i-val-u-a-tion sys-tem-at-i-cal-ly
  el-e-men-ta-ry ir-ra-tion-al per-mu-ta-tion sem-i-norm sem-i-norm-ed norm-ed
}

\begin{document}

\title{On the Density of Sumsets}

\author{Paolo Leonetti}
\address{Institute of Analysis and Number Theory, Graz University of Technology | Kopernikusgasse 24/II, 8010 Graz, Austria}
\curraddr{Department of Decision Sciences, Universit\`a ``Luigi Bocconi'' | via Roentgen 1, 20136 Milano, Italy} 
\email{leonetti.paolo@gmail.com}
\urladdr{https://sites.google.com/site/leonettipaolo/}
\author{Salvatore Tringali}
\address{School of Mathematics,
	Hebei Normal University | Shijiazhuang, Hebei province, 050024 China}
\email{salvo.tringali@gmail.com}
\urladdr{http://imsc.uni-graz.at/tringali}

\thanks{P.L. was supported by the Austrian Science Fund (FWF), project F5512-N26 and by PRIN 2017, grant 2017CY2NCA.}
\subjclass[2010]{Primary 11B05, 11B13, 28A10; Secondary 39B62, 60B99}
%
%
\keywords{Asymptotic density; analytic density; Banach density; Buck density; logarithmic density; sumsets; upper and lower densities (and quasi-densities)}
\begin{abstract}
\noindent{}
Recently introduced by the authors in [Proc.~Edinb.~Math.~Soc.~\textbf{60} (2020), 139--167], quasi-densities form a large family of real-valued functions partially defined on the power set of the integers that 
serve as a unifying framework for the study of many known densities (including the asymptotic density, the Banach density, the logarithmic density, the analytic density, and the P\'{o}lya density). 

We further contribute to this line of research by proving that (i) for each $n \in \mathbf N^+$ and $\alpha \in [0,1]$, there is $A \subseteq \mathbf{N}$ with $kA \in \text{dom}(\mu)$ and $\mu(kA) = \alpha k/n$ for every quasi-density $\mu$ and every $k=1,\ldots, n$, where $kA:=A+\cdots+A$ is the $k$-fold sumset of $A$ and $\text{dom}(\mu)$ denotes the domain of definition of $\mu$;
(ii) for each $\alpha \in [0,1]$ and every non-empty finite $B\subseteq \mathbf{N}$, there is $A \subseteq \mathbf{N}$ with $A+B \in \mathrm{dom}(\mu)$ and $\mu(A+B)=\alpha$ for every quasi-density $\mu$;
(iii) for each $\alpha \in [0,1]$, there exists $A\subseteq \mathbf{N}$ with $2A = \mathbf{N}$ such that $A \in \text{dom}(\mu)$ and $\mu(A) = \alpha$ for every quasi-density $\mu$.
Proofs rely on the properties of a little known density first considered by R.\,C.~Buck and the ``structure'' of the set of all quasi-densities; in particular, they are rather different than previously known proofs of special cases of the same results. 
\end{abstract}
\maketitle
\thispagestyle{empty}

\section{Introduction}\label{sec:introduction}
Given $X_1, \ldots, X_n \subseteq \mathbf Z$, we denote by $X_1 + \cdots + X_n$ the \emph{sumset} of $X_1, \ldots, X_n$ (i.e., the set of all sums of the form $x_1 + \cdots + x_n$ with $x_i \in X_i$ for all $i = 1, \ldots, n$); in particular, we write $kX$ for the \emph{$k$-fold sumset} (i.e., the sumset of $k$ copies) of a given $X \subseteq \bf Z$. Sumsets are some of the most fundamental objects studied in additive combinatorics \cite{Na96,Ru09}, with a great variety of results relating the ``size'' of the summands $X_1, \ldots, X_n$ to that of the sumset $X_1 + \cdots + X_n$. 

When the summands are finite, the size is usually the number of elements. Otherwise, many different notions of size have been considered, each corresponding to some real-valued function (either totally or partially defined on the power set of $\bf Z$) that, while retaining essential features of a probability, is better suited than a measure to certain applications.
In the latter case, the focus has definitely been on the asymptotic density $\mathsf d$, the lower asymptotic density $\mathsf d_\star$, and the Schnirelmann density $\sigma$, where we recall that, for a set $X \subseteq \bf N$,
\[
\mathsf d(X) := \lim_{n \to \infty} \frac{|X \cap \llb 1, n \rrb|}{n},
\quad 
\mathsf d_\star(X) := \liminf_{n \to \infty}  \frac{|X \cap \llb 1, n \rrb|}{n},
\quad\text{and}\quad
\sigma(X) := \inf_{n \ge 1} \frac{|X \cap \llb 1, n \rrb|}{n},
\]
with the understanding that the limit in the definition of $\mathsf d(X)$ has to exist. It is entirely beyond the scope of this manuscript to provide a survey of the relevant literature, so we limit ourselves to list a couple of classical results that are somehow related with our work:

\begin{itemize}
\item In \cite{Vo57} (see, in particular, the last paragraph of the section ``Added in proof''), B.~Volkmann proved that, for all $n \ge 2$ and $\alpha_1, \ldots, \alpha_n, \beta \in {]0,1]}$ with $\alpha_1 + \cdots + \alpha_n \le \beta$, there are $A_1, \ldots, A_n \subseteq \mathbf N$ such that $\mathsf d(A_i) = \alpha_i$
for each $i = 1, \ldots, n$ and $\mathsf d(A_1 + \cdots + A_n) = \beta$.
\item In \cite[Theorem 1]{Na90}, M.\,B.~Nathanson showed that, for $n \ge 2$ and all $\alpha_1, \ldots, \alpha_n, \beta \in [0,1]$ with $\alpha_1 + \cdots + \alpha_n \le \beta$, there exist $X_1, \ldots, X_n \subseteq \mathbf N$ with $\mathsf d_\star(X_1) = \sigma(X_1) = \alpha_i$ for each $i = 1, \ldots, n$ and $\mathsf d_\star(X_1 + \cdots + X_n) = \sigma(X_1 + \cdots + X_n) = \beta$. 
\end{itemize}
In a similar vein, A.~Faisant et al.~have more recently proved the following (see \cite[Theorem 1.3]{Grekos2018}):

\begin{theorem}\label{thm:grekos(1)}
	Given $n \in \mathbf N^+$ and $\alpha \in [0,1]$, there is $A \subseteq \mathbf N$ with $\mathsf d(kA) = k\alpha/n$ for each $k = 1, \ldots, n$.
\end{theorem}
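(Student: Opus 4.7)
The plan is to realize $A$ as a Beatty-like set defined by a fractional-part condition. Fix any irrational $\theta > 0$ and set
\[
A := \bigl\{m \in \mathbf{N} : \{m\theta\} \in [0,\alpha/n)\bigr\},
\]
where $\{x\}$ denotes the fractional part of $x$. By Weyl's equidistribution theorem, the sequence $(\{m\theta\})_{m \ge 1}$ is uniformly distributed modulo $1$, so $\mathsf{d}(A) = \alpha/n$ and the case $k=1$ is settled. (The trivial case $\alpha = 0$ is handled by $A = \varnothing$.)

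Next, for each $k \in \{1, \ldots, n\}$ I would first establish the upper bound $\mathsf{d}(kA) \le k\alpha/n$: if $m = m_1 + \cdots + m_k$ with $m_i \in A$, then $\{m\theta\} \equiv \sum_i \{m_i\theta\} \pmod 1$, and the right-hand side is strictly less than $k\alpha/n \le 1$, so no wrap-around modulo $1$ occurs. Hence $kA \subseteq \{m \in \mathbf{N}: \{m\theta\} \in [0, k\alpha/n)\}$, a set of density $k\alpha/n$ by Weyl. For the matching lower bound, I would show that almost every $m$ with $\{m\theta\} \in [0, k\alpha/n)$ lies in $kA$: decompose $\{m\theta\} = t_1 + \cdots + t_k$ with each $t_i \in [0, \alpha/n)$ (for instance, $t_i = \{m\theta\}/k$), pick $m_1, \ldots, m_{k-1}$ with $\{m_i\theta\}$ close to $t_i$ and $\sum_{i<k} m_i$ close to a pre-chosen target, and set $m_k := m - \sum_{i<k} m_i$; then $\{m_k\theta\}$ is near $t_k$, forcing $m_k \in A$.

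The main obstacle lies in this last step: one must control both the fractional parts of the $m_i$'s and the sum constraint $\sum m_i = m$ simultaneously. My proposed resolution invokes a quantitative form of Weyl's theorem (giving discrepancy bounds) together with a Schnirelmann/Pl\"unnecke-type sumset-density argument, ensuring that the positive-density subsets $\{m : \{m\theta\} \in I\}$ have $k$-fold sumsets covering almost every sufficiently large integer whose fractional-part signature falls in the appropriate window. Combining this with the upper bound then yields $\mathsf{d}(kA) = k\alpha/n$ exactly, for each $k=1,\ldots,n$.
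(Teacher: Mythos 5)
Your construction $A = \{m \in \mathbf{N} : \{m\theta\} \in [0,\alpha/n)\}$ and the equidistribution argument are, in essence, the route of the cited papers \cite{Grekos2018,BH2019}, and the sketch is sound. The upper bound is immediate from the no-wrap-around observation. The lower bound can be finished with discrepancy estimates alone; your appeal to a Schnirelmann/Pl\"unnecke-type lemma is the one genuinely suspect step, since Pl\"unnecke bounds sumsets from \emph{above} and $A$ need not have positive Schnirelmann density. The clean way: fix $\epsilon>0$, take $m$ large with $\{m\theta\}\in[k\epsilon,\, k\alpha/n - k\epsilon)$, set $t:=\{m\theta\}/k$ and $\delta := \epsilon/(2k)$, and choose $m_1,\ldots,m_{k-1}\in\llb 0,\lfloor m/k\rfloor\rrb$ with $\{m_i\theta\}\in[t-\delta,t)$, which a uniform discrepancy bound guarantees once $m$ is large; then $m_k := m-\sum_{i<k}m_i \ge 0$ and $\{m_k\theta\}\in(t,\,t+(k-1)\delta]\subset[0,\alpha/n)$, so $m\in kA$. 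Letting $\epsilon\to0$ yields $\mathsf d_\ast(kA)\ge k\alpha/n$.

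This is, however, a genuinely different route from the paper's. The paper does not reprove Theorem~\ref{thm:grekos(1)} via equidistribution; it proves the stronger Theorem~\ref{thm:range&sumsets(1)}, which makes $\mu(kA)=k\alpha/n$ hold \emph{simultaneously for every quasi-density} $\mu$. It does so by working with the Buck density $\mathfrak b$: by Proposition~\ref{prop:basic}\ref{it:prop:basic(1)}, $\mathfrak b^\ast$ majorizes and $\mathfrak b_\ast$ minorizes every upper/lower quasi-density, so it suffices to land $kA$ in $\mathrm{dom}(\mathfrak b)$ with $\mathfrak b(kA)=k\alpha/n$; the set $A$ is then assembled as an explicit union of arithmetic-progression pieces via the positional representation of Proposition~\ref{prop:expansion}, with the additivity of Proposition~\ref{lem:buckondisjoint} replacing equidistribution. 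Note that your Beatty-like set cannot serve that stronger purpose: when $0<\alpha/n<1$, the set $A=\{m:\{m\theta\}\in[0,\alpha/n)\}$ meets every residue class but contains no infinite arithmetic progression, so $\mathfrak b^\ast(A)=1$ and $\mathfrak b_\ast(A)=0$, hence $A\notin\mathrm{dom}(\mathfrak b)$. Your approach is shorter and more elementary for the asymptotic density; the paper's buys the uniform statement over Banach, analytic, logarithmic, P\'olya, Buck, and all other quasi-densities at once.
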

Their proof combines the equidistribution theorem (i.e., that the sequence $n \mapsto na \bmod 1$ is uniformly distributed in the interval $[0,1]$ for every irrational number $a$) with the elementary property that, for every $\alpha \in {]0,1]}$, the asymptotic density of the set 
$\left\{\lfloor \alpha^{-1}n \rfloor: n \in \mathbf N\right\}$
is equal to $\alpha$.
In the same man\-u\-script, one can also find the following (see \cite[Theorem 1.2]{Grekos2018}):

\begin{theorem}
\label{thm:grekos(2)}
Given $\alpha \in [0,1]$ and a non-empty finite $B \subseteq \mathbf N$, there is $A \subseteq \mathbf N$ with $\mathsf d(A+B) = \alpha$.
\end{theorem}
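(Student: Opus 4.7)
The plan is to dispose of the two trivial endpoints first and then construct $A$ by selecting, inside each block of a suitable partition of $\mathbf N$ into increasingly long intervals, a short initial ``prefix''. For $\alpha=0$ I would take $A=\emptyset$ and for $\alpha=1$ I would take $A=\mathbf N$, so that $A+B=\mathbf N+\min B$ is cofinite. Since replacing $B$ by $B-\min B$ only shifts $A+B$ by a constant and hence does not affect its asymptotic density, there is no loss in assuming $0=\min B$, and I set $m:=\max B$.

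Setting $S_0:=0$, $S_k:=k(k+1)/2$ for $k\ge 1$, and $I_k:=\llb S_{k-1}+1,S_k\rrb$, the $I_k$'s are consecutive blocks of lengths $L_k=k$ tiling $\mathbf N$, and I define
\[
A:=\bigcup_{k\ge 1}\llb S_{k-1}+1,\,S_{k-1}+\lfloor\alpha k\rfloor\rrb,
\]
i.e., inside each $I_k$ keep precisely the leftmost $\lfloor\alpha k\rfloor$ integers. The crucial observation will be that once $k$ is large enough that $\lfloor\alpha k\rfloor$ exceeds every gap between consecutive elements of $B$ and $\lfloor\alpha k\rfloor+m<k$ (both conditions eventually hold since $B$ is finite and $\alpha<1$), the sumset $(A\cap I_k)+B$ collapses to the single interval $\llb S_{k-1}+1,\,S_{k-1}+\lfloor\alpha k\rfloor+m\rrb$, which sits inside $I_k$. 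Consequently, the pieces $(A\cap I_k)+B$ become pairwise disjoint from some index $k_0$ onward.

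A direct count then gives
\[
\bigl|(A+B)\cap\llb 1,S_k\rrb\bigr|=\sum_{j=1}^{k}\bigl(\lfloor\alpha j\rfloor+m\bigr)+O(1)=\alpha S_k+O(k),
\]
so dividing by $S_k\sim k^2/2$ yields that the density of $A+B$ along $(S_k)_{k\ge 1}$ converges to $\alpha$. For an arbitrary $N\in\llb S_{k-1}+1,S_k\rrb$, the monotonicity of $n\mapsto|(A+B)\cap\llb 1,n\rrb|$ squeezes $|(A+B)\cap\llb 1,N\rrb|/N$ between $|(A+B)\cap\llb 1,S_{k-1}\rrb|/S_k$ and $|(A+B)\cap\llb 1,S_k\rrb|/S_{k-1}$, and since $S_{k-1}/S_k=(k-1)/(k+1)\to 1$, both bounds converge to $\alpha$. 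This forces $\mathsf d(A+B)=\alpha$, as desired.

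The main obstacle I anticipate is balancing two opposing requirements on the block lengths $L_k$: the additive contribution of $B$ to the count of $A+B$ at each block is a fixed constant $m$, so the accumulated error across $k$ blocks is $O(k)$ and must be dwarfed by $S_k$, forcing $L_k\to\infty$; on the other hand, the density sampled inside a block must remain close to the value at the previous endpoint, which requires $L_k=o(S_k)$. The choice $L_k=k$ (or indeed any mildly unbounded increasing sequence) satisfies both conditions simultaneously, and this is precisely what makes the construction go through cleanly.
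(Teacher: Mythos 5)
Your argument is correct, and it takes a genuinely different and more elementary route than the paper. You build $A$ block by block, keeping a prefix of length $\lfloor\alpha k\rfloor$ inside the $k$-th block $I_k$ of linearly growing length $k$; after reducing to $\min B=0$ and handling the endpoints $\alpha\in\{0,1\}$ separately, the finiteness of $B$ contributes only an additive $m=\max B$ per block to $|(A+B)\cap\llb 1,S_K\rrb|$, and this $O(K)$ error is dwarfed by $S_K\sim K^2/2$; the squeeze over intermediate $N$ then closes the argument. This works directly with the limit defining $\mathsf d$. The paper instead proves the stronger Theorem~\ref{thm:georges}, valid uniformly for every quasi-density $\mu$: it reduces everything to the Buck density $\mathfrak b$, approximates $\alpha$ from above and below by $h/k$ and $(h+1)/k$ with $h\ge 2y+1$ (where $y=\max B$), obtains a set $C\in\mathscr A_\infty$ with $\mathfrak b(C)=k\alpha-h$ from Theorem~\ref{thm:range&sumsets(1)}, and sets $A:=(k\cdot\mathbf H+\llb 0,h-y-1\rrb)\cup(k\cdot C+h-y)$, so that $A+B$ splits into disjoint pieces whose Buck densities add to $\alpha$ by Propositions~\ref{prop:basic} and~\ref{lem:buckondisjoint}. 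What your construction buys is a short, self-contained proof needing no density machinery; what it gives up is the uniformity over $\mu$, which is the whole point of the paper: your $A+B$ (for $0<\alpha<1$) contains arbitrarily long intervals inside the blocks $I_k$ and also arbitrarily long gaps at their ends, so for instance it has upper Banach density $1$ and lower Banach density $0$, hence does not lie in the domain of the Banach density, whereas the paper's $A\in\mathscr A_\infty$ lands $A+B$ in $\mathrm{dom}(\mathfrak b)$ and therefore in $\mathrm{dom}(\mu)$ for every quasi-density $\mu$.
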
 
This is a partial generalization of Theorem \ref{thm:grekos(1)} for the special case where $n = 1$. 
A complete generalization was obtained by P.-Y.~Bienvenu and F.~Hennecart, shortly after \cite{Grekos2018} was posted on arXiv in Sept.~2018: Their proof is based on a ``finite version'' of Weyl's criterion for equidistribution due to P.~Erd{\H o}s and P.~Tur\'an
(see \cite[Theorem 1.8]{BH2019} for details and  \cite[Theorems 1.1.a and 1.5]{BH2019} for additional results along the same lines).

Yet another item in the spirit of Theorem \ref{thm:grekos(1)} is the following result by N.~Hegyv\'ari et al.~(see \cite[Proposition 2.1]{HHP2019}):

\begin{proposition}\label{prop:he-he-pa}
	Given $\alpha \in [0,1]$, there is $A \subseteq \mathbf N$ with $0 \in A$ and $\gcd(A) = 1$ such that $\mathsf d(A) = \alpha$ and $2A = \mathbf N$.
\end{proposition}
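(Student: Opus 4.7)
The strategy is to build $A$ as the union of a ``thin'' additive basis of order $2$ and a set of prescribed asymptotic density $\alpha$. Concretely, I will first produce some $B \subseteq \mathbf{N}$ satisfying $2B = \mathbf{N}$, $\mathsf{d}(B) = 0$, and $\{0,1\} \subseteq B$; then invoke Theorem~\ref{thm:grekos(1)} with $n = 1$ to pick a set $D \subseteq \mathbf{N}$ with $\mathsf{d}(D) = \alpha$; and finally define $A := B \cup D$.

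For the thin basis I would use a classical digit-splitting construction. Fixing an integer $r \ge 2$, let $E$ be the set of nonnegative integers whose base-$r$ digits vanish at every odd-indexed position, and set $O := r \cdot E$, so that $O$ is the set of nonnegative integers whose base-$r$ digits vanish at every even-indexed position. Decomposing the base-$r$ expansion of any $n \in \mathbf{N}$ into its even- and odd-indexed parts immediately gives $E + O = \mathbf{N}$; and a routine count along powers of $r$ shows $\mathsf{d}(E) = \mathsf{d}(O) = 0$. Setting $B := E \cup O$, one then has $\{0,1\} \subseteq E \subseteq B$, $2B \supseteq E + O = \mathbf{N}$, and, by the subadditivity of the upper density, $\mathsf{d}^\ast(B) \le \mathsf{d}^\ast(E) + \mathsf{d}^\ast(O) = 0$, so $\mathsf{d}(B) = 0$.

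With $B$ and $D$ in place, the verification that $A := B \cup D$ meets all the required conditions is bookkeeping: the inclusion $\{0,1\} \subseteq B \subseteq A$ yields $0 \in A$ and $\gcd(A) = 1$; the inclusion $2A \supseteq 2B = \mathbf{N}$ forces $2A = \mathbf{N}$; and monotonicity of $\mathsf{d}_\ast$ together with subadditivity of $\mathsf{d}^\ast$ give
\[
\alpha = \mathsf{d}(D) \le \mathsf{d}_\ast(A) \le \mathsf{d}^\ast(A) \le \mathsf{d}^\ast(B) + \mathsf{d}^\ast(D) = 0 + \alpha = \alpha,
\]
so $\mathsf{d}(A)$ exists and equals $\alpha$. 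The main obstacle is the production of the thin basis $B$ (essentially St\"ohr's construction), which is short but is the one place the argument makes nontrivial use of a clever idea; everything else reduces to routine density manipulations. As an aside, for $\alpha > 0$ one can bypass $B$ entirely by taking $A := \llb 0, N \rrb \cup \{\lfloor k/\alpha \rfloor : k \ge 1\}$ with $N := \lceil 1/\alpha \rceil$ and using the bounded-gap property of the sequence $(\lfloor k/\alpha \rfloor)_{k \ge 1}$ to check $2A = \mathbf{N}$, but this does not cover the boundary case $\alpha = 0$.
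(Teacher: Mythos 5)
Your proof is correct, and its architecture is the same as the one the paper uses for the strengthening of this statement (Theorem~\ref{thm:bases}): write $A$ as the union of a ``thin'' additive basis of order $2$ and a set of prescribed density, then squeeze $\mathsf d(A)$ between $\mathsf d_\ast$ and $\mathsf d^\ast$ using monotonicity and subadditivity. (Note that Proposition~\ref{prop:he-he-pa} itself is quoted from Hegyv\'ari--Hennecart--Pach; the paper's own contribution is Theorem~\ref{thm:bases}, where $\mathsf d$ is replaced by an arbitrary quasi-density $\mu$, uniformly in $\mu$.)

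Where the two arguments diverge is in the choice of thin basis and in the density framework. The paper takes $Q := \{x^2+y^2 : x,y \in \mathbf N\}$, gets $2Q = \mathbf N$ from Lagrange's four-square theorem, and uses \cite[Theorem 4.2]{LTsmall2019} to obtain the much stronger fact that $Q$ has \emph{Buck} upper density zero. This extra strength is not decorative: to make the conclusion uniform over all quasi-densities one has to control $\mathfrak b^\ast$ rather than $\mathsf d^\ast$, since $\mathfrak b_\ast \le \mu_\ast \le \mu^\ast \le \mathfrak b^\ast$ (Proposition~\ref{prop:basic}\ref{it:prop:basic(1)}). Your St\"ohr-type digit-splitting set would in fact serve in that role too: $E$ lies in a union of $r^j$ residue classes modulo $r^{2j}$, so $\mathfrak b^\ast(E) \le r^{-j}$ for every $j$, and hence $\mathfrak b^\ast(B) = 0$. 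But your write-up only records $\mathsf d^\ast(B) = 0$, so as it stands it proves the asymptotic-density statement you set out to prove and nothing more. Likewise, for the density-$\alpha$ part the paper invokes its own Theorem~\ref{thm:range&sumsets(1)} (which controls Buck density, hence every $\mu$), while you invoke Theorem~\ref{thm:grekos(1)} with $n=1$ (which controls only $\mathsf d$). The trade-off is clear: your version is entirely elementary and self-contained, with no appeal to Lagrange or to \cite{LTsmall2019}, but gives the weaker conclusion; the paper's version costs two external inputs but buys uniformity over all quasi-densities. The closing aside about $A := \llb 0, N\rrb \cup \{\lfloor k/\alpha\rfloor : k \ge 1\}$ is plausible in spirit but the gap bound needs a hair more care (the gaps are $\le \lceil 1/\alpha\rceil + 1$ in general, not $\le \lceil 1/\alpha\rceil$), and in any case it is not part of the main argument.
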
 
In the present paper, we aim to prove that Theorems \ref{thm:grekos(1)} and \ref{thm:grekos(2)} and Proposition  \ref{prop:he-he-pa} hold, much more generally, with the asymptotic density $\mathsf d$ replaced by an arbitrary \emph{quasi-density} $\mu$ (see Sect.~\ref{subsec:densities} for definitions) and --- what is perhaps more interesting --- uniformly in the choice of $\mu$ (see Theorems \ref{thm:range&sumsets(1)}--\ref{thm:bases} for a precise formulation). Most notably, this implies that Theorems \ref{thm:grekos(1)} and \ref{thm:grekos(2)} are true with $\mathsf d$ replaced by the Banach density \cite[Sect.~5.7]{Ru09} or the analytic density \cite[Sect.~III.1.3]{Te95}, both of which play a rather important role in number theory and related fields and for which we are not aware of any similar results in the literature.

We emphasize that the proofs of our 
generalizations of Theorems \ref{thm:grekos(1)} and \ref{thm:grekos(2)} take a completely different route than the ones found in \cite{BH2019,Grekos2018}: 
The latter critically depend on special features of the asymptotic density, whereas our approach relies on the properties of a little known density first considered by R.\,C.~Buck \cite{Bu0} and the ``structure'' of the set of all quasi-densities. This is in line with one of our long-term goals, which was also the motivation for first introducing quasi-densities in \cite{LT}: Obtain sharper versions of various results in additive combinatorics and analytic number theory by shedding light on the ``(minimal) structural properties'' they depend on.

\section{Preliminaries}\label{sec:preliminaries}
In this section, we establish some notations and terminology used throughout the paper and prepare the ground for the proofs of our main theorems in Sect.~\ref{sec:main-results}.

\subsection{Generalities}
\label{subsec:generalities}
We denote by $\mathbf R$ the real numbers, by $\mathbf H$ either the integers $\bf Z$ or the non-negative integers $\bf N$, and by $\mathbf N^+$ the positive integers. Given $x \in \mathbf R$, we use $\lfloor x \rfloor$ for the greatest integer $\le x$ and set $\mathrm{frac}(x) := x - \lfloor x \rfloor$; and given $X \subseteq \mathbf Z$ and $h,k \in \mathbf Z$, we define $k \cdot X + h := \{kx+h: x \in X\}$. An \emph{arithmetic progression} of $\HHb$ is then a set of the form $k\cdot \HHb+h$ with $k \in \mathbf N^+$ and $h \in \mathbf H$, and we write 
\begin{itemize}
\item $\mathscr{A}$ for the collection of all finite unions of arithmetic progressions of $\HHb$;
\item $\mathscr{A}_\infty$ for the collection of all subsets of $\mathbf H$ that can be expressed as the union of a finite set and countably many arithmetic progressions of $\bf H$;
\item $\llb a, b \rrb := \{x \in \mathbf Z: a \le x \le b\}$ for the discrete interval between two integers $a$ and $b$.
\end{itemize}
If $X$ and $Y$ are sets, then we write $\mathcal P(X)$ for the power set of $X$ and $X \subseteq_\textup{fin} Y$ to mean that $X \setminus Y$ is finite. Further terminology and notations, if not ex\-plained when first introduced, are standard, should be clear from context, or are borrowed from \cite{LT}.

\subsection{Densities (and quasi-densities)}
\label{subsec:densities}

We say a function $\mu^\star: \mathcal P(\mathbf H) \to \mathbf R$ is an \emph{upper density} (on $\mathbf H$) provided that, for all $X, Y \in \mathcal P(\mathbf H)$, the following conditions are satisfied:

\begin{enumerate}[label={\rm (\textsc{f}\arabic{*})}]
	\item\label{it:f1} $\mu^\star(X) \le \mu^\star(\mathbf H) = 1$;
	\item\label{it:f2} $\mu^\star$ is \emph{monotone}, i.e., if $X \subseteq Y$ then $\mu^\star(X) \le \mu^\star(Y)$;
	\item\label{it:f3} $\mu^\star$ is \emph{subadditive}, i.e., $\mu^\star(X \cup Y) \le \mu^\star(X) + \mu^\star(Y)$;
	\item\label{it:f4} $\mu^\star(k \cdot X + h) = \frac{1}{k} \mu^\star(X)$ for every $k \in \mathbf N^+$ and $h \in \mathbf H$.
\end{enumerate}
In addition, we say $\mu^\star$ is an  \emph{upper quasi-density} (on $\bf H$) if it satisfies \ref{it:f1}, \ref{it:f3}, and \ref{it:f4}. 

It is arguable that non-monotone upper quasi-densities --- whose existence is guaranteed by \cite[Theorem 1]{LT} --- are not so interesting from the point of view of applications. Yet, it seems meaningful to understand if monotonicity is ``critical'' to our conclusions or can be dispensed with: This is basically our motivation for considering
upper quasi-densities in spite of our main interest lying in the study of upper densities (it is obvious that every upper density is an upper quasi-density).
 
With the above in mind, we let the \emph{conjugate} of an upper quasi-density $\mu^\star$ be the function
\[
\mu_\star: \mathcal{P}(\HHb) \to \RRb: X\mapsto 1-\mu^\star(\HHb\setminus X).
\]
Then we refer to the restriction $\mu$ of $\mu^\star$ to the set 
$$
\mathcal D := 
\{X \subseteq \HHb: \mu^\star(X) = \mu_\star(X)\}
$$ 
as the \emph{quasi-density} induced by $\mu^\star$, or simply as a quasi-density (on $\bf H$) if explicit reference to $\mu^\star$ is unnecessary. Accordingly, we call $\mathcal D$ the \emph{domain} of $\mu$ and denote it by $\text{dom}(\mu)$.

Upper densities (and upper quasi-densities) were first introduced in \cite{LT} and further studied in \cite{LTsmall2019,LT17}. Notable examples include the upper asymptotic, upper Banach, upper analytic, upper logarithmic, upper P\'olya, and upper Buck densities, see \cite[Sect.~6 and 
Examples 4, 5, 6, and 8]{LT} for details. In particular, we recall that the \emph{upper Buck density} (on $\bf H$) is the function
\begin{equation}\label{equ:def-upper-buck}
\mathfrak{b}^\star: \mathcal{P}(\HHb) \to \RRb: X \mapsto \inf_{A \in \mathscr{A},\, X\subseteq A}\mathsf{d}^\star(A \cap \mathbf N),
\end{equation}
where $\mathscr A$ is the collection of all finite unions of arithmetic progressions of $\HHb$ (as already mentioned in Sect.~\ref{subsec:generalities}) and $\mathsf d^\star$ is the \emph{upper asymptotic density} on $\bf N$, that is, the function
\begin{equation}\label{eq:asymptotic-density}
\mathcal P(\mathbf N) \to \mathbf R: X \mapsto \limsup_{n \to \infty} \frac{|X \cap \llb 1, n \rrb|}{n}.
\end{equation}
We shall write $\mathfrak b_\star$ and $\mathfrak b$, respectively, for the conjugate of and the density induced by $\mathfrak b^\star$; we call $\mathfrak b_\star$ the \emph{lower Buck density} and $\mathfrak b$ the \emph{Buck density} (on $\bf H$). By \cite[Example 5]{LT}, one has
\begin{equation}\label{equ:lower-buck-density}
\mathfrak b_\star(X) = \sup_{A \in \mathscr A,\, A \subseteq X} \mathsf d^\star(A \cap \mathbf N), \quad \text{for every } X \subseteq \mathbf H.
\end{equation}
Note that the density induced by and the conjugate of $\mathsf d^\star$ are, resp., the asymptotic density $\mathsf d$ and the lower asymptotic density $\mathsf d_\star$ introduced in Sect.~\ref{sec:introduction}: One should keep this in mind when comparing our main results (that is, Theorems \ref{thm:range&sumsets(1)}--\ref{thm:bases}) with Theorems \ref{thm:grekos(1)} and \ref{thm:grekos(2)} and Proposition \ref{prop:he-he-pa}.

\subsection{Basic properties}
\label{subec:basic-properties}
Our primary goal in this section is to prove an inequality for the upper and the lower Buck density of sumsets of a certain special form (Proposition \ref{prop:sumset}). We start with a recollection of basic facts that are either implicit to or already contained in \cite{LT}.

\begin{proposition}\label{prop:basic}
	Let $\mu^\star$ be an upper quasi-density on $\HHb$. 
The following hold:
	\begin{enumerate}[label={\rm (\roman{*})}]
		\item \label{it:prop:basic(1)} $\mathfrak b_\star(X) \le \mu_\star(X)\le \mu^\star(X) \le \mathfrak b^\star(X)$ for every $X\subseteq \HHb$.
		\item \label{it:prop:basic(3)} If $h \in \mathbf H$ and $X \subseteq Y \subseteq \bf H$, then $\mathfrak b_\star(X+h) = \mathfrak b_\star(X) \le \mathfrak b_\star(Y)$.
		\item \label{it:prop:basic(4)} $\mathscr{A}\subseteq \mathrm{dom}(\mathfrak{b}) \subseteq \mathrm{dom}(\mu)$ and $\mu(X)=\mathfrak{b}(X)$ for every $X \in \mathrm{dom}(\mathfrak b)$.
		\item \label{it:prop:basic(5)} If $m \in \mathbf N^+$ and $\mathfrak h \subseteq \llb 0, m-1 \rrb$, then $m \cdot \mathbf H + \mathfrak h \in \mathrm{dom}(\mathfrak b)$ and $\mathfrak b(m \cdot \mathbf H + \mathfrak h) = \frac{|\mathfrak h|}{m}$.
		\item \label{it:prop:basic(6)} If $X\subseteq \HHb$ is finite, then $X \in \mathrm{dom}(\mathfrak b)$ and $\mathfrak b(X)=0$. 
		\item \label{it:prop:basic(7)} If $X \in \mathrm{dom}(\mathfrak b)$, $Y \subseteq \mathbf H$, and $\mathfrak b^\star(Y) = 0$, then $X \cup Y \in \mathrm{dom}(\mathfrak b)$ and $\mathfrak b(X \cup Y) = \mathfrak b(X)$.
	\end{enumerate}
\end{proposition}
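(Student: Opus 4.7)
The plan is to dispatch the six items in sequence, leaning on (a) subadditivity (F3) and translation/dilation (F4), (b) the supremum formula \eqref{equ:lower-buck-density}, and (c) the ``universality'' result from \cite{LT} that every upper quasi-density is bounded above by $\mathfrak b^\ast$ (equivalently, below by $\mathfrak b_\ast$). For (i), the middle inequality $\mu_\ast(X) \le \mu^\ast(X)$ is immediate from applying (F3) to $\mathbf H = X \cup (\mathbf H \setminus X)$ and using (F1) to get $1 \le \mu^\ast(X) + \mu^\ast(\mathbf H \setminus X)$; the outer inequalities are equivalent to each other under conjugation and both are quoted from \cite{LT}. Item (ii) reads off \eqref{equ:lower-buck-density} directly: translation is a density-preserving bijection of $\mathscr A$ (by (F4) with $k=1$ applied to $\mathsf d^\ast$), so $\mathfrak b_\ast$ is translation invariant, and monotonicity under inclusion is built into the supremum.

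For (iii), I first argue that $\mathscr A \subseteq \mathrm{dom}(\mathfrak b)$ with $\mathfrak b(A) = \mathsf d^\ast(A)$ for every $A \in \mathscr A$: writing $A = \bigsqcup_{h \in \mathcal H}(m \cdot \mathbf H + h)$ and applying (F1), (F3), (F4) to the canonical partition $\mathbf H = \bigsqcup_{h=0}^{m-1}(m \cdot \mathbf H + h)$ forces $\mu^\ast(A) = |\mathcal H|/m = \mathsf d^\ast(A)$ for every upper quasi-density (the subadditivity chain $1 \le \sum \mu^\ast(m \cdot \mathbf H + h) = 1$ is saturated, which then pins down $\mu^\ast$ on all of $\mathscr A$). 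In particular this fixes $\mathfrak b^\ast(A) = \mathfrak b_\ast(A) = \mathsf d^\ast(A)$. The remainder of (iii) is a direct corollary of (i), since $\mathfrak b_\ast(X) = \mathfrak b^\ast(X)$ collapses the sandwich $\mathfrak b_\ast \le \mu_\ast \le \mu^\ast \le \mathfrak b^\ast$ to an equality. Item (iv) is then a restatement of (iii), since $m \cdot \mathbf H + \mathcal H \in \mathscr A$ has asymptotic density $|\mathcal H|/m$ by direct counting.

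For (v), given finite $X \subseteq \mathbf H$ and $m$ larger than every element of $X$, the $\mathscr A$-cover $\bigsqcup_{x \in X}(m \cdot \mathbf H + x)$ of $X$ has asymptotic density $|X|/m$; letting $m \to \infty$ gives $\mathfrak b^\ast(X) = 0$, and combined with $\mathfrak b_\ast \ge 0$ (taking $\emptyset$ in \eqref{equ:lower-buck-density}) this yields (v). For (vi), (F3) applied to $\mathfrak b^\ast$ gives $\mathfrak b^\ast(X \cup Y) \le \mathfrak b^\ast(X) + \mathfrak b^\ast(Y) = \mathfrak b(X)$, while the monotonicity half of (ii) gives $\mathfrak b_\ast(X \cup Y) \ge \mathfrak b_\ast(X) = \mathfrak b(X)$; together with (i) this forces equality throughout, so $X \cup Y \in \mathrm{dom}(\mathfrak b)$ with $\mathfrak b(X \cup Y) = \mathfrak b(X)$.

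The principal obstacle is the outer inequality in (i) for \emph{non-monotone} upper quasi-densities: the naive ``$X \subseteq A$ implies $\mu^\ast(X) \le \mu^\ast(A)$'' argument is unavailable without (F2), so we genuinely rely on the refined analysis in \cite{LT}. Once (i) is accepted, the remaining items (iii)--(vi) reduce to elementary bookkeeping with subadditivity and the identity $\mu^\ast(A) = \mathsf d^\ast(A)$ for $A \in \mathscr A$.
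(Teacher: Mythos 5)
Your argument follows the paper's general strategy — everything hinges on the extremality of $\mathfrak b^\ast$ among upper quasi-densities, which both you and the paper quote from \cite{LT} — but where the paper disposes of (ii)--(v) by further citations (Prop.\ 2(iv), Prop.\ 15, Cor.\ 5, Prop.\ 7, Prop.\ 6 of \cite{LT}), you supply short self-contained derivations from \eqref{equ:def-upper-buck}, \eqref{equ:lower-buck-density}, and the axioms \ref{it:f1}, \ref{it:f3}, \ref{it:f4}. Your argument for (vi) is the paper's sandwich computation essentially verbatim. This more explicit route is a genuine gain in readability at the cost of a few extra lines; in particular your proof of (v) by shrinking $\mathscr A$-covers $\bigcup_{x\in X}(m\cdot\mathbf H+x)$ is clean and worth keeping.

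One step needs care. In (iii) you write $A=\bigsqcup_{h\in\mathcal H}(m\cdot\mathbf H+h)$ with $\mathcal H\subseteq\llb 0,m-1\rrb$ and run a saturation argument against the partition $\mathbf H=\bigsqcup_{h=0}^{m-1}(m\cdot\mathbf H+h)$. Over $\mathbf H=\mathbf Z$ this normal form is exact, but over $\mathbf H=\mathbf N$ a generic $A\in\mathscr A$ only has this form up to a finite set (e.g.\ $2\cdot\mathbf N+3=(2\cdot\mathbf N+1)\setminus\{1\}$), so the saturation identity pins down $\mu^\ast$ only on the periodic members of $\mathscr A$; patching the finite discrepancy would call on (v) and (vi), which in your ordering come afterwards. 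There is no circularity — your (v) and (vi) do not use (iii) — but the presentation needs reordering, or better, the saturation argument can simply be dropped: from \eqref{equ:def-upper-buck} and \eqref{equ:lower-buck-density} any $A\in\mathscr A$ is an admissible cover of itself and an admissible inner set of itself, so $\mathsf d^\ast(A)\le\mathfrak b_\ast(A)\le\mathfrak b^\ast(A)\le\mathsf d^\ast(A)$, giving $A\in\mathrm{dom}(\mathfrak b)$ and $\mathfrak b(A)=\mathsf d^\ast(A)$ at once, for either choice of $\mathbf H$; item (i) then transfers this to every $\mu$ exactly as you say.
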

\begin{proof}
We have already mentioned that $\mathfrak b^\star$,
as defined in Eq.~\eqref{equ:def-upper-buck},
is an upper density
and hence monotone. 
With this in mind,
 \ref{it:prop:basic(1)} follows from \cite[Proposition 2(vi), Theorem 3, and Corollary 4]{LT}, 
where among other things it is established that $\mathfrak b^\star$ is the pointwise maximum of the set of all upper quasi-densities on $\HHb$;
 \ref{it:prop:basic(3)} follows from  \cite[Proposition 15]{LT} (which shows that $\mathfrak b_\star$ is ``shift-invariant'') and the monotonicity of $\mathfrak b^\star$; \ref{it:prop:basic(4)} and \ref{it:prop:basic(5)} follow from \cite[Corollary 5 and Proposition 7]{LT}; and \ref{it:prop:basic(6)} follows from \ref{it:prop:basic(1)} and \cite[Proposition 6]{LT}. As for 
	\ref{it:prop:basic(7)}, note that, if $X \in \dom(\mathfrak b)$, $Y \subseteq \bf H$, and $\mathfrak b^\star(Y) = 0$, then we have from \ref{it:prop:basic(1)}, \ref{it:prop:basic(3)}, and \ref{it:f3} that 
\[
	\mathfrak b^\star(X)  = \mathfrak b_\star(X) \le \mathfrak b_\star(X \cup Y) \le \mathfrak b^\star(X \cup Y) \le \mathfrak b^\star(X) + \mathfrak b^\star(Y) = \mathfrak b^\star(X),
\]
which proves that $X \cup Y \in \dom(\mathfrak b)$ and $\mathfrak b(X \cup Y) = \mathfrak b(X)$, as wished.
\end{proof}

The next result shows that $\mathfrak b^\star$ and $\mathfrak b_\star$ are additive under some circumstances.
\begin{proposition}\label{lem:buckondisjoint}
	Let $X, Y \subseteq \bf H$ and $A, B \in \mathscr A$, and assume $X \subseteq A$, $Y \subseteq B$, and $A \cap B = \emptyset$. Then $\mathfrak b^\star(X \cup Y) = \mathfrak b^\star(X) + \mathfrak b^\star(Y)$ and $\mathfrak b_\star(X \cup Y) = \mathfrak b_\star(X) + \mathfrak b_\star(Y)$.
\end{proposition}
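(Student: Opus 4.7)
The plan is to establish both equalities by bracketing each side with matching inequalities, exploiting the defining formulas for $\mathfrak{b}^\ast$ and $\mathfrak{b}_\ast$ (the latter being the supremum representation recorded in \eqref{equ:lower-buck-density}) together with the key observation that the intersection of any element of $\mathscr{A}$ with $A$ or with $B$ stays in $\mathscr{A}$, and that such intersections are forced to be disjoint by the hypothesis $A \cap B = \emptyset$.

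For the upper Buck density, one inequality $\mathfrak{b}^\ast(X \cup Y) \le \mathfrak{b}^\ast(X) + \mathfrak{b}^\ast(Y)$ is immediate from the subadditivity axiom \ref{it:f3}. For the reverse, I would take an arbitrary $C \in \mathscr{A}$ with $X \cup Y \subseteq C$ and set $C_1 := C \cap A$ and $C_2 := C \cap B$; since $\mathscr{A}$ is closed under finite intersection, both $C_1$ and $C_2$ belong to $\mathscr{A}$, they are disjoint, and $X \subseteq C_1$, $Y \subseteq C_2$. By Proposition \ref{prop:basic}\ref{it:prop:basic(4)} applied to the upper asymptotic density, every element of $\mathscr{A}$ lies in $\mathrm{dom}(\mathsf{d})$; hence on disjoint unions in $\mathscr{A}$ the function $\mathsf{d}^\ast$ is finitely additive, and monotonicity yields
\[
\mathsf{d}^\ast(C) \ge \mathsf{d}^\ast(C_1 \cup C_2) = \mathsf{d}^\ast(C_1) + \mathsf{d}^\ast(C_2) \ge \mathfrak{b}^\ast(X) + \mathfrak{b}^\ast(Y).
\]
Taking the infimum over all admissible $C$ gives the desired bound.

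For the lower Buck density, I would exploit \eqref{equ:lower-buck-density} symmetrically. The inequality $\mathfrak{b}_\ast(X \cup Y) \ge \mathfrak{b}_\ast(X) + \mathfrak{b}_\ast(Y)$ follows by taking $A' \in \mathscr{A}$ with $A' \subseteq X$ and $B' \in \mathscr{A}$ with $B' \subseteq Y$: these are automatically disjoint (being contained in $A$ and $B$), so $A' \cup B' \in \mathscr{A}$ sits inside $X \cup Y$ and $\mathsf{d}^\ast(A' \cup B') = \mathsf{d}^\ast(A') + \mathsf{d}^\ast(B')$ by the same additivity as above; supremizing over $A'$ and $B'$ independently gives the bound. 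Conversely, given $C' \in \mathscr{A}$ with $C' \subseteq X \cup Y \subseteq A \cup B$, the disjointness of $A$ and $B$ forces $C' = (C' \cap A) \sqcup (C' \cap B)$, and the two pieces lie in $\mathscr{A}$ with $C' \cap A \subseteq X$ and $C' \cap B \subseteq Y$; hence
\[
\mathsf{d}^\ast(C') = \mathsf{d}^\ast(C' \cap A) + \mathsf{d}^\ast(C' \cap B) \le \mathfrak{b}_\ast(X) + \mathfrak{b}_\ast(Y),
\]
and taking the supremum over $C'$ closes the argument.

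The main technical point is really just the cheap but essential fact that $\mathscr{A}$ is closed under intersections with arithmetic progressions and that $\mathsf{d}^\ast$ is finitely additive on disjoint unions of elements of $\mathscr{A}$; once these are in hand, the hypothesis $A \cap B = \emptyset$ does all the work, because it forces the ``$A$-part'' and the ``$B$-part'' of any covering or inner approximation in $\mathscr{A}$ to be disjoint, which is precisely what upgrades subadditivity to additivity. No obstacle of real depth is expected.
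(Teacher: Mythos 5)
Your proof is correct and follows essentially the same route as the paper's: close under intersection with $A$ and $B$, exploit that $\mathsf d^\ast$ is finitely additive on disjoint members of $\mathscr A$, and pass the inf/sup through. The only cosmetic difference is that you decompose a single covering (resp.\ inner) set $C$ into $C\cap A$ and $C\cap B$ and split each identity into two inequalities, whereas the paper works with pairs $E\supseteq X$, $F\supseteq Y$ (resp.\ $E\subseteq X$, $F\subseteq Y$) and writes a chain of equalities. One small point worth tightening: you justify the additivity of $\mathsf d^\ast$ on disjoint unions in $\mathscr A$ by invoking only Proposition~\ref{prop:basic}\ref{it:prop:basic(4)} (membership of $\mathscr A$ in $\mathrm{dom}(\mathsf d)$), but domain membership alone does not immediately yield finite additivity; the paper appeals to \ref{it:prop:basic(4)} \emph{and} \ref{it:prop:basic(5)}, the latter giving the explicit value $\mathfrak b(m\cdot\mathbf H+\mathcal H)=|\mathcal H|/m$, from which additivity on $\mathscr A$ follows after putting the two sets over a common modulus.
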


\begin{proof}
Given $E,F,G \in \mathscr{A}$ with $
	X\subseteq E$, $
	Y\subseteq F$, and $
	G \subseteq X \cup Y$, 
	it is clear from our assumptions that
	\begin{equation}\label{equ:(3a)}
	X \subseteq E\cap A \in \mathscr{A}, 
	\quad
	Y \subseteq F \cap B \in \mathscr{A},
	\quad\text{and}\quad
	(E \cap A) \cap (F \cap B) \subseteq A \cap B = \emptyset,
	\end{equation}
	and
	\begin{equation}\label{equ:(4a)}
	\left\{
	\begin{array}{l}
	\mathscr A \ni G \cap A \subseteq X
	\quad\text{and}\quad
	\mathscr A \ni G \cap B \subseteq Y, \\
	G = (G \cap A) \cup (G \cap  B)
	\quad\text{and}\quad
	(G \cap A) \cap (G \cap B) = \emptyset.
	\end{array}
	\right.
	\end{equation}
	On the other hand, we have by parts \ref{it:prop:basic(4)} and \ref{it:prop:basic(5)} of Proposition \ref{prop:basic} that
	\[
	\mathsf d^\star(V \cup W) = \mathsf d^\star(V) + \mathsf d^\star(W),
	\quad\text{for all } V, W \in \mathscr A \text{ with }V \cap W = \emptyset;
	\]
	and it is a basic fact that, for all non-empty subsets $S$ and $T$ of $\mathbf R$,
	\[
	\inf S + \inf T = \inf (S + T)
	\quad\text{and}\quad
	\sup S + \sup T = \sup(S + T).
	\] 
	So, putting it all together, we conclude from Eqs.~\eqref{equ:def-upper-buck} and \eqref{equ:(3a)} that
	\[
	\begin{split}
	\mathfrak{b}^\star(X)+\mathfrak{b}^\star(Y) & = \inf\{\mathsf d^\star(E) + \mathsf d^\star(F): E, F \in \mathscr A,\, X \subseteq E, \text{ and } Y \subseteq F\} \\
	& \le \inf\{\mathsf d^\star(E \cap A) + \mathsf d^\star(F \cap B): E, F \in \mathscr A,\, X \subseteq E, \text{ and } Y \subseteq F\} \\
		& \le \inf\{\mathsf d^\star((E \cup F) \cap (A \cup B)): E, F \in \mathscr A,\, X \subseteq E, \text{ and } Y \subseteq F\} \\
	& \leq \inf\{\mathsf d^\star(G): G \in \mathscr A \text{ and }X \cup Y \subseteq G\} \\
	& = \mathfrak b^\star(X \cup Y),
	\end{split} 
	\]
	which, by subadditivity of $\mathfrak b^\star$, leads to $\mathfrak{b}^\star(X \cup Y) = \mathfrak{b}^\star(X)+\mathfrak{b}^\star(Y)$. Likewise, Eqs.~\eqref{equ:lower-buck-density} and \eqref{equ:(4a)} yield
	\[
	\begin{split}
	\mathfrak b_\star(X \cup Y) 
		& = \sup\{\mathsf d^\star(G): G \in \mathscr A \text{ and } G \subseteq X \cup Y\} \\
		& = \sup \{\mathsf d^\star(E \cup F): E, F \in \mathscr A, \, E \subseteq X,\text{ and }F \subseteq Y\} \\
		& = \sup\{\mathsf d^\star(E) + \mathsf d^\star(F): E, F \in \mathscr A, \, E \subseteq X,\text{ and }F \subseteq Y\} \\
		& = \mathfrak b_\star(X) + \mathfrak b_\star(Y);
	\end{split}
	\]
in particular, it is seen from Eq.~\eqref{equ:(4a)} that, if $G \in \mathscr A$ and $G \subseteq X \cup Y$, then $\mathscr A \ni G \cap A \subseteq X$ and $\mathscr A \ni G \cap B \subseteq Y$, which is used in the second equality from the last block.
\end{proof}
It is perhaps worth noticing that Proposition \ref{lem:buckondisjoint} does not hold with $\mathfrak b^\star$ replaced by $\mathsf d^\star$. In fact, set $X := E \cap (2 \cdot \mathbf H)$ and $Y := F \cap (2 \cdot \mathbf H + 1)$, where 
\[
E := \bigcup_{n \ge 1} \llb (4n)!, (4n+1)! \rrb
\quad\text{and}\quad
F := \bigcup_{n \ge 1} \llb (4n+2)!, (4n+3)! \rrb.
\]
Then $X$ and $Y$ are both contained in disjoint arithmetic progressions of $\bf H$, but it is not difficult to see that $\mathsf d^\star(X) = \mathsf d^\star(Y) = \mathsf d^\star(X \cup Y) = \frac{1}{2}$, cf. \cite[Lemma 1]{LT}.

\begin{lemma}\label{lem:extra-fix}
Given $k, p, q \in \mathbf N^+$, $\mathfrak a \subseteq \llb 0, q-1 \rrb$, and $r \in \bf H$ with $\gcd(p,q) = 1$, let $A := q \cdot \mathbf H + \mathfrak a$ and $B := p \cdot \mathbf H + r$. Then the sets $k(A \cap B)$ and $kA \cap kB$ are both in $\mathscr A$ and their symmetric difference is finite; in particular, $k(A \cap B), kA \cap kB \in \dom(\mathfrak b)$. Moreover, $\mathfrak b(k(A \cap B)) = \mathfrak b(kA \cap kB) = (pq)^{-1} |k \mathfrak{a}|$.
\end{lemma}

\begin{proof}
We can assume $\mathfrak a \ne \emptyset$, or else the conclusion is trivial. It is also clear that, if $X = m \cdot \mathbf H + \mathfrak q$ for some $m \in \mathbf N^+$ and finite $\mathfrak q \subseteq \bf H$, then $kX = m \cdot \mathbf H + k \mathfrak q \in \mathscr A$; and it is obvious that $k(A \cap B) \subseteq kA \cap kB$ (because $x_1 + \cdots + x_k \in kA \cap kB$ for all $x_1, \ldots, x_k \in A \cap B$). Since $A \cap B \in \mathscr A$ and, by Proposition \ref{prop:basic}\ref{it:prop:basic(5)}, $\mathscr A \subseteq \dom(\mathfrak b)$, we are thus left to check that 
\[
\text{(i) } kA \cap kB \subseteq_\textup{fin} k(A \cap B)
\quad\text{and}\quad \text{(ii) } \mathfrak b(kA \cap kB) = (pq)^{-1} | k \mathfrak a|.
\]
(i) Pick $x \in kA \cap kB$ and, in case $\HHb=\NNb$, assume $x \ge k(k-1)pq$. Then $x \equiv kr \bmod p$ and there exist $a_1, \ldots, a_k \in A$ with $a_1 \le \cdots \le a_k$ such that $x = a_1 + \cdots + a_k$ (observe that, if $\mathbf H = \bf N$, then $a_k \ge (k-1)pq$). Since $p$ and $q$ are coprime, we gather from the Chinese remainder theorem that, for each $i \in \llb 1,k-1\rrb$, there is a smallest integer $a_i^\prime \ge a_i$ such that $a_i' \equiv a_i \bmod q$ and $a_i' \equiv r \bmod p$ (in particular, $a_i' \le a_i + pq$). Accordingly, set $a_k^\prime := x-\sum_{i=1}^{k-1} a_i^\prime$. 
By construction, $a_k' \equiv x - \sum_{i=1}^{k-1} a_i^\prime \equiv a_k \bmod q$ and $a_k^\prime \equiv kr - (k-1)r \equiv r \bmod{p}$. Moreover, if $\HHb=\NNb$, then 
$$
a_k^\prime = a_k-\sum_{i=1}^{k-1}(a_i^\prime-a_i) \ge (k-1)pq-(k-1)pq \ge 0.
$$
In consequence, we find that $a_1^\prime, \ldots, a_k^\prime \in A \cap B$ and hence $x = a_1^\prime+\cdots+a_k^\prime \in k(A \cap B)$. This suffices to complete the proof, because $x$ is an arbitrary element of $(kA \cap kB) \setminus V$, where $V := \llb 0, k(k-1)pq-1 \rrb$ if $\mathbf H = \bf N$ and $V := \emptyset$ otherwise (to wit, $V$ is a finite set).

\vskip 0.05cm

(ii) We have $kA = q \cdot \mathbf H + k\mathfrak a$ and hence $kA \cap kB = (q \cdot \mathbf H + k\mathfrak a) \cap (p \cdot \mathbf H + kr)$. Since $\gcd(p,q) = 1$, it follows from the Chinese remainder theorem that $kA \cap kB$ is, apart from finitely many elements, the union of $|k\mathfrak a|$ pairwise disjoint arithmetic progressions modulo $pq$. Therefore, we conclude from parts \ref{it:prop:basic(5)}--\ref{it:prop:basic(7)} of Proposition \ref{prop:basic} that $\mathfrak b(kA \cap kB) = (pq)^{-1}|k \mathfrak a|$, as wished.
\end{proof}

\begin{proposition}\label{prop:sumset}
Fix $n,t, p, q \in \NNb^+$ and $s \in \mathbf N$ such that $nt<q$ and $\gcd(p,q) = 1$, let $Y$ be a non-empty subset of $q \cdot \mathbf H + t$, and define
$
X := q \cdot \HHb + \llb 0, t-1 \rrb$, $V := p \cdot \mathbf H + s$, and 
$S := (X \cup Y) \cap V$.
Then 
\begin{equation}\label{equ:(2)-19-09-2021}
\frac{kt}{pq} + \mathfrak b_\star(k(Y \cap V))
= \mathfrak b_\star(kS) \le \mathfrak b^\star(kS) = \frac{kt}{pq} + \mathfrak b^\star(k(Y \cap V)) \le \frac{kt+1}{pq},
\quad\text{for every } k \in \llb 1, n \rrb.
\end{equation}
In particular, if $Y \in \mathscr A$, then 
\[
kS, k(Y  \cap V) \in \dom(\mathfrak b)
\quad\text{and}\quad
\mathfrak b(kS) = \frac{kt}{pq} + \mathfrak b(k (Y \cap V)),
\quad\text{for every } k \in \llb 1, n \rrb.
\]
\end{proposition}

\begin{proof}
	The ``In particular'' part of the statement is straightforward from Eq.~\eqref{equ:(2)-19-09-2021} and Proposition \ref{prop:basic}\ref{it:prop:basic(6)}, by the fact that $mA \in \mathscr A$ for all $m \in \mathbf N^+$ and $A \in \mathscr A$. So, we focus on the rest.
	
	Fix $k \in \llb 1, n \rrb$, and define $X' := X \cap V \in \mathscr A$, $Y' := Y \cap V$, and $V' := (q \cdot \mathbf H + t) \cap V \in \mathscr A$. 
	Since $Y$ is a non-empty subset of $q \cdot \mathbf H + t$ and $p$ is coprime to $q$ (by hypothesis), we gather from the Chinese re\-main\-der theorem that $pqx + r \in Y' \subseteq V' = pq \cdot \bf H + r$ for some $x \in \mathbf H$ and $r \in q \cdot \mathbf H + t$. Using that $X'$ is itself a finite union of arithmetic progressions modulo $pq$, it follows that, for all $i \in \mathbf N^+$ and $j \in \mathbf N$, 
	\begin{equation}\label{equ:(1)-18-09-2021}
	iX' + jV' = iX' + jr \subseteq_\textup{fin} iX' + j(pqx + r) \subseteq iX' + jY' \subseteq iX' + jV' \in \mathscr A
	\end{equation}
	and, on the other hand,
	\begin{equation}\label{equ:(2)-18-09-2021}
	iX' + jY' \subseteq iX + j(q \cdot \mathbf H + t) = q \cdot \mathbf H + \llb 0, it-i \rrb + jt = q \cdot \mathbf H + \llb jt, (i+j)t-i \rrb;
	\end{equation}
	in particular, the relation $iX' + jr \subseteq_\textup{fin} iX' + j(pqx + r)$ becomes an equality when $\mathbf H = \mathbf Z$. 
	Hence,
	\begin{equation}\label{equ:(3)-18-09-2021}
	kY' \subseteq kV' \subseteq q \cdot \mathbf H + kt \in \mathscr A
	\end{equation}
	and
	\begin{equation}\label{equ:(1)-19-09-2021}
	Z_k := \bigcup_{i=1}^{k} (iX'+(k-i)Y') \subseteq \bigcup_{i=1}^k (iX'+(k-i)V') =: Z_k' \in \mathscr A.
	\end{equation}
	So taking into account that
	\[
	kS = \bigcup_{i=0}^k (iX' + (k-i)Y') = Z_k \cup kY'
	\]
	and considering that $(k - i)t \le kt - (i + 1) + 1 \le nt < q$ for all $i \in \mathbf N$ (by hypothesis) and, by Eq.~\eqref{equ:(2)-18-09-2021},
	\[
	Z_k' \subseteq \bigcup_{i=1}^{k} (q \cdot \mathbf H + \llb (k-i)t, kt - i \rrb) = q \cdot \mathbf H + \llb 0, kt - 1 \rrb,
	\]
	we gather from Propositions \ref{prop:basic}\ref{it:prop:basic(3)} and \ref{lem:buckondisjoint} and Eq.~\eqref{equ:(3)-18-09-2021} that
	\[
	\mathfrak b_\star(Z_k) + \mathfrak b_\star(kY') = \mathfrak b_\star(kS) \le \mathfrak b^\star(kS) = \mathfrak b^\star(Z_k) + \mathfrak b^\star(kY') \leq \mathfrak b^\star(Z_k) + \mathfrak b^\star(kV') = \mathfrak b^\star(Z_k) + \frac{1}{pq}.
	\]
	It remains to see that $\mathfrak b_\star(Z_k) = 	\mathfrak b^\star(Z_k) = (pq)^{-1} kt$. For, set
	\begin{equation}\label{eq:(4)-18-09-2021}
	S' := X' \cup V' = (X \cup (q \cdot \mathbf H + t)) \cap V = (q \cdot \mathbf H + \llb 0, t \rrb) \cap V.
	\end{equation}
	We have
	\[
	kS' = \bigcup_{i=0}^k (iX' + (k-i)V') = Z_k' \cup kV' \in \mathscr A.
	\]
	Recalling that each of $kS'$, $Z_k'$, and $kV'$ is a finite union of arithmetic progressions (and hence, by Prop\-o\-si\-tion \ref{prop:basic}\ref{it:prop:basic(5)}, a set in the domain of $\mathfrak b$) with $kV' \subseteq q \cdot \mathbf H + kt$ (see Eq.~\eqref{equ:(3)-18-09-2021}) and $kt < q$, it thus follows from Eq.~\eqref{eq:(4)-18-09-2021}, Lemma \ref{lem:extra-fix}, and Propositions \ref{prop:basic}\ref{it:prop:basic(6)} and \ref{lem:buckondisjoint} that
	\begin{equation*}\label{eq:(5)-18-09-2021}
	\frac{kt+1}{pq} = \mathfrak b(kS') = \mathfrak b(Z_k') + \mathfrak b(kV') = \mathfrak b(Z_k') + \frac{1}{pq}.
	\end{equation*}
    Moreover, we have from Eqs.~\eqref{equ:(1)-18-09-2021} and \eqref{equ:(1)-19-09-2021} that $Z_k' \subseteq_\textup{fin} Z_k \subseteq Z_k'$. Therefore, we conclude from the last display and Proposition \ref{prop:basic}\ref{it:prop:basic(7)} that $Z_k \in \dom(\mathfrak b)$ and $\mathfrak b(Z_k) = \mathfrak b(Z_k') = (pq)^{-1} kt$ (as wished).
\end{proof}

\subsection{A positional representation}
\label{subsec:positional-reps}
We introduce a non-standard positional representation of real numbers (Proposition \ref{prop:expansion}) that will be of key importance in the proof of Theorem \ref{thm:range&sumsets(1)}; cf. \cite[Theorem 1.6]{Ni06} for an ``analogous'' result attributed by I.~Niven to G.~Cantor.

\begin{lemma}\label{lem:divisibility}
	Let $\alpha$ be an irrational number in the interval $[0,1]$, and fix $m,t\in \NNb^+$. There then exist infinitely many $n \in \NNb^+$ such that
	$\lfloor (nt+1)\alpha \rfloor \in m \cdot \mathbf N^+$.
\end{lemma}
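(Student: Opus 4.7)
My approach is to translate the divisibility constraint into a statement about fractional parts and then appeal to Weyl's equidistribution theorem. The key observation I would start from is that for a nonnegative integer $k$, the condition $k \in m\cdot\NNb^+$ is equivalent to requiring $k \geq m$ together with $\mathrm{frac}(k/m) = 0$. Applied to $k = \lfloor (nt+1)\alpha \rfloor$, this means that $\lfloor (nt+1)\alpha \rfloor \in m\cdot\NNb^+$ if and only if $(nt+1)\alpha$ falls into the union $\bigcup_{j \ge 1} [mj,\, mj+1)$; equivalently, $(nt+1)\alpha \geq m$ and $\mathrm{frac}\bigl((nt+1)\alpha/m\bigr) \in [0,\, 1/m)$.

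Next I would set $\gamma := t\alpha/m$ and $\delta := \alpha/m$, so that $(nt+1)\alpha/m = n\gamma + \delta$. Since $\alpha \in [0,1]$ is irrational we have $\alpha > 0$, and since $m,t \in \NNb^+$ the number $\gamma$ is also irrational. By Weyl's equidistribution theorem, the sequence $(\mathrm{frac}(n\gamma+\delta))_{n \ge 1}$ is therefore uniformly distributed modulo $1$, so the set
\[
\mathcal{N} := \bigl\{n \in \NNb^+ : \mathrm{frac}(n\gamma + \delta) \in [0,\, 1/m)\bigr\}
\]
has natural density equal to $1/m$; in particular, it is infinite.

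Finally, since $\alpha > 0$, the size condition $(nt+1)\alpha \geq m$ fails for only finitely many $n \in \NNb^+$, so infinitely many elements of $\mathcal{N}$ satisfy it as well. For any such $n$ we then have $\lfloor (nt+1)\alpha \rfloor \in m \cdot \NNb^+$, which is the conclusion of the lemma. The argument contains no real obstacle: once the divisibility is phrased via fractional parts of a linear sequence with irrational slope, equidistribution handles everything, and the positivity of $\alpha$ upgrades the conclusion from \emph{multiple of $m$} to \emph{positive multiple of $m$} for all but finitely many $n$.
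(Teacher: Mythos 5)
Your proof is correct and takes essentially the same approach as the paper's: both recast the divisibility requirement as a fractional-part condition on a linear sequence with irrational slope and then invoke Weyl's equidistribution theorem to obtain infinitely many admissible $n$. The paper restricts to $n=Nm$ and asks that $\mathrm{frac}(Nt\alpha)$ land in $\left]0,\frac{1-\alpha}{m}\right[$, whereas you normalize by $m$ and ask that $\mathrm{frac}\bigl((nt+1)\alpha/m\bigr)$ land in $[0,1/m)$ together with the size condition $(nt+1)\alpha\ge m$ --- a cosmetic reparametrization of the same argument.
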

\begin{proof}
	Since $t\alpha$ is irrational, the sequence $(\mathrm{frac}(Nt\alpha))_{N\ge 0}$ is equi\-distributed in $[0,1[$\,. 
	This implies that there exists a set $\mathcal{N}\subseteq \NNb^+$ such that $\mathsf{d}(\mathcal{N})=(1-\alpha)/m$ and $\mathrm{frac}(Nt\alpha) \in \left]0,(1-\alpha)/m\right[$ for all $N \in \mathcal{N}$, see e.g. \cite[Exercise 1.15, p. 6]{MR0419394}. Since
	$$
	\mathrm{frac}((Ntm+1)\alpha)=m\mathrm{frac}(Nt\alpha)+\alpha \in  {]0,1[}\,,
	$$
	it follows that $\lfloor (Ntm+1)\alpha\rfloor=m\lfloor Nt\alpha \rfloor \in m \cdot \mathbf N^+$ for all $N \in \mathcal{N}$.
\end{proof}

\begin{proposition}\label{prop:expansion}
Let $\alpha$ be an irrational number in the interval $[0,1]$, and fix $n \in \mathbf N^+$. There then exist sequences $(\beta_i)_{i \ge 1}$ and $(q_i)_{i \ge 0}$ of positive integers with $q_0 = 1$ such that 
\begin{equation}\label{equ:cond1}
\alpha = \sum_{i \ge 1} \frac{n!\,\beta_i}{q_1 \cdots q_i}
\end{equation}
and, for every $i \in \mathbf N^+$, 
\begin{equation*}
\gcd(q_i,n q_0 \cdots q_{i-1}) = 1,
\quad 
\alpha_{i-1} \in {]0,1[}\,, 
\quad\text{and}\quad
\lfloor q_i \alpha_{i-1} \rfloor \in n! \cdot \mathbf N^+,
\end{equation*}
where we have defined
\begin{equation}\label{eq:definitionalphai}
\alpha_0 := \alpha
\quad\text{and}\quad 
\alpha_i := q_1 \cdots q_{i} \left(\alpha - \sum_{j=1}^i \frac{n!\,\beta_j}{q_1 \cdots q_j}\right).
\end{equation}
\end{proposition}

\begin{proof}
Given $x \in [0,1]$ and $N \in \mathbf N^+$, let 
$$
\mathcal{Q}(x,N) := \left\{q \in \NNb^+ \colon \mathrm{gcd}(q,N)=1 \text{ and }\lfloor qx \rfloor \in n! \cdot \mathbf N^+\right\};
$$
it follows by Lemma \ref{lem:divisibility} that, if $x$ is irrational, then the set $\mathcal{Q}(x,N)$ is infinite.
Thus, since $\alpha_0, \alpha_1, \ldots$ are all irrational numbers by their definition in Eq.~\eqref{eq:definitionalphai} and the irrationality of $\alpha$, we can recursively define sequences $(q_i)_{i\ge 0}$ and  $(\beta_i)_{i\ge 1}$ of \textit{positive} integers by taking $q_0 := 1$ and, for each $i \in \mathbf N^+$,
\begin{equation}\label{eq:definitionqi}
q_i := \min\mathcal{Q}(\alpha_{i-1},nq_0\cdots q_{i-1})
\quad\text{and}\quad
\beta_i:=\left\lfloor \frac{q_i \alpha_{i-1}}{n!}\right\rfloor;
\end{equation} 
in particular, $\beta_i$ is a positive integer because $\lfloor q_i \alpha_{i-1} \rfloor = n!\, k_i$ for some $k_i \in \mathbf N^+$ (by definition of the set $\mathcal{Q}(\alpha_{i-1},nq_0\cdots q_{i-1})$), so that $k_i \le q_i \alpha_{i-1}/n! < k_i + 1/n!$ and hence $\beta_i = k_i$. It is clear that
\begin{equation}\label{eq:propertybetai}
q_i \alpha_{i-1}-1 < n!\, \beta_i < q_i \alpha_{i-1},
\quad \text{for every } i \in \mathbf N^+.
\end{equation}
On the other hand, $\alpha_0 =\alpha \in {]0,1[}$\,; and if $\alpha_{i-1} \in {]0,1[}$ for some $i \in \NNb^+$, then it follows by Eqs.~\eqref{eq:definitionalphai} and \eqref{eq:propertybetai} that $\alpha_{i}
=q_{i}\alpha_{i-1}-n!\,\beta_{i} \in {]0,1[}\,$. Thus, we see by induction that
\[
\alpha_i \in {]0,1[}\,, \quad\text{for all }i \in \NNb.
\]
We may note, thanks to Eq.~\eqref{eq:definitionqi}, that $q_i > q_i \alpha_{i-1} > n! \ge 1$, hence $q_i\ge 2$ for all $i \in \NNb^+$. 
To conclude, identity \eqref{equ:cond1} is obtained by the fact that
\[
\left|\,\alpha - \sum_{j=1}^i \frac{n!\,\beta_j}{q_1 \cdots q_j}\,\right|=\frac{\alpha_i}{q_1\cdots q_i} < \frac{1}{2^i}, \quad \text{for all }i \in \mathbf N^+.
\qedhere
\]
\end{proof}

\section{Main results}
\label{sec:main-results}
This section is devoted to the main results of the paper. We start with a generalization of Theorem \ref{thm:grekos(1)}. Recall from Sect.~\ref{subsec:generalities} that $\mathscr A_\infty$ denotes the family of all subsets of $\mathbf H$ that can be expressed as the union of a finite set and countably many arithmetic progressions of $\bf H$.

\begin{theorem}\label{thm:range&sumsets(1)}
	Given $n \in \mathbf N^+$ and $\alpha \in [0,1]$, there exists $A \in \mathscr A_\infty$ such that 
	$
	kA \in \mathrm{dom}(\mu)$ and $\mu(kA)=k\alpha/n$	
	for each $k \in \llb 1, n \rrb$ and every quasi-density $\mu$ on $\bf H$.
\end{theorem}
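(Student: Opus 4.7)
The plan is to exploit Proposition~\ref{prop:basic}(i) and (iv): if $A \in \mathscr{A}_\infty$ satisfies $\mathfrak{b}^\ast(kA) = \mathfrak{b}_\ast(kA) = k\alpha/n$ for every $k \in \llbracket 1, n \rrbracket$, then $kA \in \mathrm{dom}(\mathfrak{b}) \subseteq \mathrm{dom}(\mu)$ and $\mu(kA) = \mathfrak{b}(kA) = k\alpha/n$ for every quasi-density $\mu$ on $\mathbf{H}$. The task thus reduces to producing such an $A$.

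The rational and boundary cases are handled by explicit constructions. For $\alpha = 0$, take $A := \emptyset$. For rational $\alpha = p/q \in (0,1)$, take $A := (nq \cdot \mathbf{H} + \llbracket 0, p-1 \rrbracket) \cup \{p\}$: since $\{p\}$ is a non-empty subset of $nq \cdot \mathbf{H} + p$ and $np < nq$, Proposition~\ref{prop:sumset} (with $t = p$, $q' = nq$) combined with $\mathfrak{b}^\ast(\{kp\}) = 0$ from Proposition~\ref{prop:basic}(vi) forces $\mathfrak{b}(kA) = k\alpha/n$. For $\alpha = 1$, take $A := \{0\} \cup (n \cdot \mathbf{H} + 1)$; a direct computation gives $kA = \{0\} \cup \bigcup_{j=1}^{k} (n \cdot \mathbf{H} + j)$, whence $\mathfrak{b}(kA) = k/n$ by parts (iv), (vi), (vii) of Proposition~\ref{prop:basic}.

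The main case is irrational $\alpha \in (0,1)$. Apply Proposition~\ref{prop:expansion} to obtain $(\beta_i)_{i \ge 1}$ and $(q_i)_{i \ge 0}$, and set $t_i := (n-1)!\,\beta_i$ (an integer since $n!\,\beta_i = \lfloor q_i \alpha_{i-1} \rfloor \in n! \cdot \mathbf{N}^+$ by \eqref{equ:cond2}), $Q_i := q_1 \cdots q_i$ (with $Q_0 := 1$), and $c_i := \sum_{j=1}^{i-1} Q_{j-1} t_j$; by \eqref{eq:propertybetai}, $n t_i = n!\,\beta_i < q_i$. Define
\begin{equation*}
Y_i := c_i + Q_{i-1} \llbracket 0, t_i - 1 \rrbracket + Q_i \cdot \mathbf{H}, \qquad A := \bigcup_{i \ge 1} Y_i \in \mathscr{A}_\infty.
\end{equation*}
A quick induction shows $c_i \equiv t_1 \pmod{q_1}$ for all $i \ge 2$, so $V_1 := \bigcup_{i \ge 2} Y_i$ is a non-empty subset of $q_1 \cdot \mathbf{H} + t_1$; re-indexing also yields $V_1 = t_1 + q_1 \cdot A'$, where $A'$ is built by the same recipe from the shifted sequences $(q_{j+1})_{j \ge 0}$ and $(\beta_{j+1})_{j \ge 1}$. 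Applying Proposition~\ref{prop:sumset} to $S = A$, $X = Y_1$, $V = V_1$, and extracting from its proof that $kA$ is the disjoint union of a set $Z \subseteq q_1 \cdot \mathbf{H} + \llbracket 0, kt_1 - 1 \rrbracket$ and $kV_1 \subseteq q_1 \cdot \mathbf{H} + kt_1$, Proposition~\ref{lem:buckondisjoint} delivers not only $\mathfrak{b}^\ast(kA) = kt_1/q_1 + \mathfrak{b}^\ast(kV_1)$ but also the lower-density analogue $\mathfrak{b}_\ast(kA) = kt_1/q_1 + \mathfrak{b}_\ast(kV_1)$.

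From $V_1 = t_1 + q_1 \cdot A'$, property (F4), and the scaling relation $\mathfrak{b}_\ast(q_1 \cdot X + h) = q_1^{-1} \mathfrak{b}_\ast(X)$ (a direct consequence of \eqref{equ:lower-buck-density}), one gets $\mathfrak{b}^\ast(kV_1) = q_1^{-1} \mathfrak{b}^\ast(kA')$ and $\mathfrak{b}_\ast(kV_1) = q_1^{-1} \mathfrak{b}_\ast(kA')$. Iterating the previous step on $A', A'', \ldots$ yields, for every $m \in \mathbf{N}^+$ and each $k \in \llbracket 1, n \rrbracket$,
\begin{equation*}
\mathfrak{b}^\ast(kA) = \sum_{i=1}^{m} \frac{k t_i}{Q_i} + \frac{\mathfrak{b}^\ast(kA^{(m)})}{Q_m}, \qquad \mathfrak{b}_\ast(kA) = \sum_{i=1}^{m} \frac{k t_i}{Q_i} + \frac{\mathfrak{b}_\ast(kA^{(m)})}{Q_m},
\end{equation*}
where $A^{(m)}$ is the analogously built set for parameters $(q_{j+m})_{j \ge 0}$ and $(\beta_{j+m})_{j \ge 1}$. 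Since $\mathfrak{b}^\ast, \mathfrak{b}_\ast \in [0, 1]$ and $Q_m \to \infty$, sending $m \to \infty$ yields $\mathfrak{b}^\ast(kA) = \mathfrak{b}_\ast(kA) = k \sum_{i \ge 1} t_i/Q_i = k\alpha/n$ by \eqref{equ:cond1} and the definition of $t_i$. The main obstacle is precisely the extraction of the parallel lower-density identity for $kA$—an equality implicit in, but not stated as part of, Proposition~\ref{prop:sumset}—which is what forces the coincidence $\mathfrak{b}^\ast(kA) = \mathfrak{b}_\ast(kA)$ and hence $kA \in \mathrm{dom}(\mathfrak{b})$.
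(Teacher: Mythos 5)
Your proof is correct, and it takes a genuinely different route from the paper's in the irrational case. The paper defines the set $A$ implicitly via Chinese-remainder intersections ($X_i := Y_{i-1}\cap(q_i\cdot\mathbf{H}+\llbracket 0,(n-1)!\beta_i-1\rrbracket)$, $Y_i := Y_{i-1}\cap(q_i\cdot\mathbf{H}+(n-1)!\beta_i)$) and never needs $\mathfrak b_\ast(kA)$ directly: it sandwiches $kA$ between the $k$-fold sumsets of the finite truncations $A_i := X_1\cup\cdots\cup X_i$ and $B_i := A_i\cup Y_i$, both of which lie in $\mathscr A$ and hence automatically in $\mathrm{dom}(\mathfrak b)$, and shows $\mathfrak b(kA_i),\mathfrak b(kB_i)\to k\alpha/n$ via an inductive application of Proposition~\ref{prop:sumset}. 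Your version instead builds the union explicitly ($Y_i := c_i + Q_{i-1}\llbracket 0,t_i-1\rrbracket + Q_i\cdot\mathbf{H}$), which exposes the exact self-similarity $V_1 = t_1 + q_1\cdot A'$; this lets you iterate Proposition~\ref{prop:sumset} \emph{on the infinite object itself} and obtain closed expressions for both $\mathfrak b^\ast(kA)$ and $\mathfrak b_\ast(kA)$, forcing them to coincide in the limit. The price is that you must extract from the proof of Proposition~\ref{prop:sumset} the unstated lower-density companion $\mathfrak b_\ast(kS) = kt/q + \mathfrak b_\ast(kV)$; you are right that this does follow (the cross-term $Z$ is squeezed between $W+kqx$ and $W$, so Proposition~\ref{prop:basic}\ref{it:prop:basic(3)} gives $\mathfrak b_\ast(Z) = \mathfrak b_\ast(W) = kt/q$, and Proposition~\ref{lem:buckondisjoint} finishes). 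A side benefit of your explicit description is that the $i$-th ``layer'' really is $q_{i}\cdot\mathbf{H}+\llbracket 0,t_{i}-1\rrbracket$ after the scaling $x\mapsto (x-c_i)/Q_{i-1}$, so Proposition~\ref{prop:sumset} applies verbatim at each stage; with the paper's CRT description the corresponding residues get permuted by $Q_{i-1}^{-1}\bmod q_i$, and one has to note (or leave implicit) that the sumset bookkeeping is affine-invariant. Your rational case is also handled differently — you feed $A := (nq\cdot\mathbf H + \llbracket 0,p-1\rrbracket)\cup\{p\}$ directly into Proposition~\ref{prop:sumset}, whereas the paper computes $kA$ by hand for $A := \{0\}\cup(nb\cdot\mathbf H + \llbracket 1,a\rrbracket)$; both are fine, and your choice unifies the two cases around the same lemma.
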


\begin{proof}
Thanks to Proposition \ref{prop:basic}\ref{it:prop:basic(4)}, it will be enough to prove that there exists $A \in \mathscr A_\infty$ such that $kA \in \text{dom}(\mathfrak b)$ and $\mathfrak b(kA) = \alpha k/n$ for each $k \in \llb 1, n \rrb$. To this end, we distinguish two cases.

\textsc{Case 1:} $\alpha$ is rational. Write $\alpha = a/b$, where $a \in \bf N$ and $b \in \mathbf N^+$. Then set 
\[
A := \{0\} \cup (nb \cdot \mathbf H + \llb 1, a \rrb)  \in \mathscr A_\infty. 
\]
Since $0 \le a \le b$, it is immediate (by induction) that  
\[
kA = \{0\} \cup (nb \cdot \mathbf H + \llb 1, ka \rrb), \quad \text{for every }k \in \llb 1, n \rrb.
\]
So, by Proposition \ref{prop:basic}\ref{it:prop:basic(4)}--\ref{it:prop:basic(7)}, we find that
\[
kA \in \text{dom}(\mathfrak b)
\quad\text{and}\quad
\mathfrak b(kA) = \frac{ka}{nb} = \frac{\alpha k}{n},
\quad\text{for every } k \in \llb 1, n \rrb.
\]

\textsc{Case 2:} $\alpha$ is irrational. By Proposition \ref{prop:expansion}, there exist sequences $(\beta_i)_{i\ge 1}$ and $(q_i)_{i\ge 0}$ of positive integers with $q_0 = 1$ such that $\gcd(q_i,n q_0 \cdots q_{i-1}) = 1$ for every $i \in \mathbf N^+$ and
\begin{equation}
\label{equ:series-expansion}
\alpha = \sum_{i \ge 1} \frac{n!\,\beta_i}{q_1 \cdots q_i}.
\end{equation}
Accordingly, we can recursively define sequences $(X_i)_{i\ge 1}$ and $(Y_i)_{i\ge 0}$ of subsets of $\HHb$ by taking $Y_0 := \HHb$ and, for each $i \in \NNb^+$,
\begin{equation}\label{eq:definitionXi}
X_{i}:=Y_{i-1} \cap (q_{i}\cdot \HHb + \llb 0, (n-1)!\,\beta_i-1 \rrb)
\quad \text{ and }\quad 
Y_{i}:=Y_{i-1} \cap (q_{i}\cdot \HHb+(n-1)!\,\beta_i).
\end{equation}
Because $q_1, q_2, \ldots$ are pairwise coprime integers, it is immediate from Eq.~\eqref{eq:definitionXi} and the Chinese remainder theorem that, for every $i \in \mathbf N^+$, there exists $r_i \in \mathbf N$ such that
\begin{equation}\label{equ:explicit-Yi}
Y_i = \bigcap_{j=1}^i (q_j \cdot \mathbf H + (n-1)! \beta_j) = q_1 \cdots q_i \cdot \mathbf H + r_i.
\end{equation}
Consequently, we obtain from Proposition \ref{prop:basic}\ref{it:prop:basic(5)} that

\begin{equation}\label{eq:bfrakyi}
kY_i \in \textrm{dom}(\mathfrak b)
\quad\text{and}\quad
\mathfrak{b}(kY_i)=\frac{1}{q_0\cdots q_{i}}\le \frac{1}{2^i}, \quad\text{for all }i, k \in \mathbf N^+.
\end{equation}
Note that the sets $X_1, X_2, \ldots$ are pairwise disjoint; moreover, 
\begin{equation}\label{equ:containment-3.1}
X_i, Y_i \in \mathscr A \setminus \{\emptyset\}
\quad\text{and}\quad
X_{i} \cup Y_{i} \subseteq Y_{i-1},
\quad \text{for every } i \in \mathbf N^+.
\end{equation}
Then, for each $i \in \NNb^+$, define $A_i := X_1 \cup \cdots \cup X_i$ and $B_i := A_i \cup Y_i$.
We set
\[
A:=\bigcup_{i\ge 1} A_i = \bigcup_{i \ge 1} X_i.
\]
It is obvious from Eq.~\eqref{equ:containment-3.1} and our definitions that $A \in \mathscr A_\infty$. So, to finish the proof, it only remains to show that $kA \in \text{dom}(\mathfrak b)$ and $\mathfrak{b}(kA)=k\alpha/n$ for all $k \in \llb 1,n\rrb$. 

For, fix $k \in \llb 1, n \rrb$ and $i \in \mathbf N^+$. 
Since $\mathfrak{b}$ is monotone, it is clear from Eqs.~\eqref{eq:bfrakyi} and \eqref{equ:containment-3.1} that 
\begin{equation}\label{equ:upper-bound-on-the-error}
\mathfrak{b}(kX_i) \le \mathfrak{b}(k(X_i\cup Y_i)) \le \frac{1}{2^{i-1}}.
\end{equation} 
On the other hand, it follows from Eq.~\eqref{equ:containment-3.1} and the above that 
\[
A_i \subseteq A \subseteq B_i
\quad\text{and}\quad
A_i, B_i \in \mathscr A \setminus \{\emptyset\},
\] 
which in turn implies that
\begin{equation}\label{eq:inequalitybstar}
kA_i \subseteq kA \subseteq kB_i,
\quad
kA_i, kB_i \in \text{dom}(\mathfrak b),
\quad\text{and}\quad
\mathfrak{b}(kA_i) \le \mathfrak{b}_\star(kA) \le \mathfrak{b}^\star(kA) \le \mathfrak{b}(kB_i).
\end{equation}
We claim that
\begin{equation}\label{equ:claim(1)}
\mathfrak{b}(kA_i)=\frac{k}{n}\sum_{j=1}^{i-1}\frac{n!\,\beta_j}{q_1\cdots q_j}+\mathfrak{b}(kX_{i}).
\end{equation}
For, let $j \in \llb 0, i-1 \rrb$ and define $Z_{i,j} := A_i \setminus A_j = X_{j+1} \cup \cdots \cup X_i$. We have from Eqs.~\eqref{eq:definitionXi} and \eqref{equ:explicit-Yi} that $X_{j+1} \subseteq Y_j$ and $Z_{i,j+1} \subseteq Z_{i,j} \subseteq Y_j$. In consequence, we see that 
\[
Z_{i,j} = X_{j+1} \cup Z_{i,j+1} = (X_{j+1} \cap Y_j) \cup (Z_{i,j+1} \cap Y_j) = (X_{j+1} \cup Z_{i,j+1}) \cap Y_j.
\]
Since each of $X_{j+1}$, $Z_{i,j+1}$, and $Y_j$ is a non-empty element of $\mathscr A$, it thus follows from Proposition \ref{prop:sumset} (applied with $q = q_{j+1}$, $t = (n-1)!\,\beta_{j+1}$, $p = q_1\cdots q_j$, $X = X_{j+1}$, $Y = Z_{i,j+1}$, and $V = Y_j$) that
\[
\mathfrak{b}(kZ_{i,j}) = \frac{k}{n}\cdot \frac{n!\,\beta_{j+1}}{q_1\cdots q_{j+1}}+\mathfrak{b}(kZ_{i,j+1}).
\]
(Note that $Z_{i,j+1} \cap Y_j = Z_{i,j+1}$.)
So considering that $A_i=Z_{i,0}$, we obtain by induction that
\[
\mathfrak{b}(kA_i)=\frac{k}{n}\cdot \frac{n! \beta_1}{q_1}+\mathfrak{b}(kZ_{i,1})=\cdots=\frac{k}{n}\sum_{j=1}^{i-1}\frac{n!\,\beta_j}{q_1\cdots q_j}+\mathfrak{b}(kZ_{i,i-1}).
\]
This suffices to prove the claim (because $X_i = Z_{i,i-1}$), and
in a similar way we find that
\begin{equation}\label{equ:claim(2)}
\mathfrak{b}(kB_i)=\frac{k}{n}\sum_{j=1}^{i-1}\frac{n!\,\beta_j}{q_1\cdots q_j}+\mathfrak{b}(k(X_{i}\cup Y_i)).
\end{equation}
The proof is essentially the same as the proof of Eq.~\eqref{equ:claim(1)}, with the sets $A_i \setminus A_j$ replaced by $B_i \setminus A_j$ ($0 \le j < i$); we omit further details.
Therefore, we gather from Eqs.~\eqref{equ:series-expansion},  \eqref{equ:upper-bound-on-the-error}, \eqref{equ:claim(1)}, and \eqref{equ:claim(2)} that
$$
\max\left\{\left|\,\mathfrak{b}(kA_i)-\frac{k\alpha}{n}\,\right|, \left|\,\mathfrak{b}(kB_i)-\frac{k\alpha}{n}\,\right|\right\}
\le 
\sum_{j\ge i}\frac{n!\,\beta_j}{q_1\cdots q_j}
+\frac{1}{2^{i-1}}.
$$
Consequently, we see that
\begin{equation*}\label{eq:limitAiandBi}
\lim_{i\to \infty}\mathfrak{b}(kA_i) = \lim_{i\to \infty}\mathfrak{b}(kB_i)=\frac{k\alpha}{n},
\end{equation*}
and we conclude, by Eq.~\eqref{eq:inequalitybstar}, that $kA \in \mathrm{dom}(\mathfrak{b})$ and $\mathfrak{b}(kA)=k\alpha/n$ (as wished).
\end{proof}

\begin{theorem}\label{thm:georges}
	Given $\alpha \in [0,1]$ and a non-empty finite set $B\subseteq \HHb$, there exists $A \in \mathscr A_\infty$ such that $A+B \in \mathrm{dom}(\mu)$  and $\mu(A+B) = \alpha$ for every quasi-density $\mu$ on $\bf H$. 
\end{theorem}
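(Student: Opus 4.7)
By Proposition \ref{prop:basic}\ref{it:prop:basic(4)} it suffices to construct $A \in \mathscr{A}_\infty$ such that $A+B \in \dom(\mathfrak{b})$ and $\mathfrak{b}(A+B) = \alpha$, and by the shift invariance of $\mathfrak{b}$ (Proposition \ref{prop:basic}\ref{it:prop:basic(3)}) we may assume $0 \in B$; set $D := \max B$ and $m := |B|$. The plan is to adapt the strategy of Theorem \ref{thm:range&sumsets(1)}: handle rational $\alpha$ by an explicit formula, and handle irrational $\alpha$ by a Proposition \ref{prop:expansion}-based iterative construction tailored so that $A+B$ (rather than $A$) has the prescribed Buck density.

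For rational $\alpha = a/b$ with $0 < a < b$ and $\gcd(a,b) = 1$, I would take $A := qb \cdot \mathbf{H} + \llbracket 0, qa - D - 1 \rrbracket$ for any sufficiently large $q \in \mathbf{N}^+$ (ensuring $qa \ge 2D$ and $qa + D < qb$). Because $qa - D \ge D$, the shifts $\llbracket 0, qa - D - 1 \rrbracket + b'$ for $b' \in B \subseteq \llbracket 0, D \rrbracket$ pairwise overlap into $\llbracket 0, qa - 1 \rrbracket$ without wrap-around modulo $qb$, so that $A + B = qb \cdot \mathbf{H} + \llbracket 0, qa - 1 \rrbracket$, whose Buck density is $qa/(qb) = \alpha$ by Proposition \ref{prop:basic}\ref{it:prop:basic(5)}. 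The extreme cases $\alpha \in \{0, 1\}$ are handled by $A = \emptyset$ and $A = \mathbf{H}$.

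For irrational $\alpha$, I would invoke Proposition \ref{prop:expansion} with its parameter $n$ set to $m$; as evident from its proof, the $q_i$'s may be required to grow arbitrarily fast, and I additionally demand $m!\beta_1 \ge 2D$, $m!\beta_1 + D < q_1$, $q_1 \cdots q_{i-1} > D$ for $i \ge 2$, and $(m-1)!\beta_i + D < q_i$ for $i \ge 2$. Writing $\gamma_i := m!\beta_i$ (divisible by $m$), $T_1 := \gamma_1 - D$, $T_i := \gamma_i/m$ for $i \ge 2$, $\rho_1 := \gamma_1$, and $\rho_i := T_i$ for $i \ge 2$, I would recursively set $Y_0 := \mathbf{H}$ and, for $i \ge 1$,
\[
X_i := Y_{i-1} \cap (q_i \cdot \mathbf{H} + \llbracket 0, T_i - 1 \rrbracket), \qquad Y_i := Y_{i-1} \cap (q_i \cdot \mathbf{H} + \rho_i),
\]
and finally $A := \bigcup_{i \ge 1} X_i \in \mathscr{A}_\infty$.

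The key claims I would verify are: (i) $X_1 + B = q_1 \cdot \mathbf{H} + \llbracket 0, \gamma_1 - 1 \rrbracket$, yielding $\mathfrak{b}(X_1 + B) = \gamma_1/q_1$ (the $B$-shifts of $\llbracket 0, T_1 - 1 \rrbracket$ merge into one interval as $T_1 \ge D$); (ii) for $i \ge 2$, the $m$ shifts $X_i + b'$ ($b' \in B$) lie in $m$ distinct residue classes modulo $q_1 \cdots q_{i-1}$ by the Chinese remainder theorem (using $q_1 \cdots q_{i-1} > D$), and each contributes $T_i$ residue classes modulo $q_1 \cdots q_i$, so that $\mathfrak{b}(X_i + B) = m T_i/(q_1 \cdots q_i) = \gamma_i/(q_1 \cdots q_i)$; (iii) the $\rho_i$'s are chosen precisely so that the sets $X_i + B$ for distinct $i$ lie in pairwise disjoint arithmetic progressions. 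Granted (i)--(iii), iterating Proposition \ref{lem:buckondisjoint} gives $\mathfrak{b}(A_i + B) = \sum_{j=1}^i \gamma_j/(q_1 \cdots q_j)$ with $A_i := X_1 \cup \cdots \cup X_i$; combined with the containments $A_i \subseteq A \subseteq A_i \cup Y_i$ and the bound $\mathfrak{b}^{\ast}((A \setminus A_i) + B) \le \mathfrak{b}(Y_i + B) = m/(q_1 \cdots q_i) \to 0$, a squeeze argument yields $A + B \in \dom(\mathfrak{b})$ with $\mathfrak{b}(A+B) = \sum_{j \ge 1} \gamma_j/(q_1 \cdots q_j) = \alpha$. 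The main technical obstacle is the residue-class bookkeeping behind (ii) and (iii), which hinges entirely on the growth conditions imposed on the $q_i$'s.
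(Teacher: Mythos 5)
Your proof is correct in outline (granting the CRT bookkeeping you flag as the main technical obstacle), but it takes a genuinely different and substantially heavier route than the paper. The paper's argument is a short reduction to Theorem~\ref{thm:range&sumsets(1)} used as a black box with $n=1$: after normalizing $\min B = 0$ and setting $y := \max B$, it picks $h,k \in \NNb^+$ with $h/k < \alpha < (h+1)/k$ and $h \ge 2y+1$, invokes Theorem~\ref{thm:range&sumsets(1)} to produce $C$ with $\mathfrak b(C) = k\alpha - h$, and defines $A := (k\cdot\HHb + \llb 0, h-y-1\rrb) \cup (k\cdot C + h - y)$. The buffer $\llb 0, h-y-1\rrb$, whose width is at least $y$, forces $A+B = (k\cdot\HHb + \llb 0, h-1\rrb) \cup (k\cdot C + h)$, and a single application of Propositions \ref{prop:basic}\ref{it:prop:basic(5)} and \ref{lem:buckondisjoint} gives $\mathfrak b(A+B) = (h + \mathfrak b(C))/k = \alpha$ --- the rational/irrational dichotomy is handled entirely inside Theorem~\ref{thm:range&sumsets(1)}. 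You instead re-run the positional-representation machinery from scratch: you re-invoke Proposition~\ref{prop:expansion} (with $n = m = |B|$ and extra growth conditions on the $q_i$), re-engineer the interval widths $T_i$ and offsets $\rho_i$ so that the $B$-translates of $X_1$ merge into a single block while those of $X_i$ for $i\ge 2$ split into $m$ disjoint residue classes modulo $q_1\cdots q_{i-1}$, and then squeeze. The asymmetry between $i=1$ and $i\ge 2$ (and the resulting different formulas $\gamma_1/q_1$ versus $mT_i/(q_1\cdots q_i)$) is correctly engineered to resum to $\alpha$, and the disjointness claims do go through under your stated growth hypotheses, so the argument is sound. What you gain is a self-contained construction that doesn't lean on Theorem~\ref{thm:range&sumsets(1)}; what you lose is a great deal of brevity and transparency, since the buffer trick reduces the whole problem to a one-line calculation once $C$ is in hand. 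Worth noting: your rational case also needs $q$ large enough to make $qa-D-1\ge 0$, and you should separately dispose of $B=\{0\}$ (i.e.\ $D=0$), both of which the paper's normalization ($y\ne 0$, else fall back on Theorem~\ref{thm:range&sumsets(1)}) handles cleanly.
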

\begin{proof}
	Similarly as in the proof of Theorem \ref{thm:range&sumsets(1)}, it suffices to prove that there exists $A \in \mathscr A_\infty$ such that $A+B \in \mathrm{dom}(\mathfrak b)$  and $\mathfrak b(A+B) = \alpha$. 
To this end, set $x:=\min B$ and $y:=\max B$. 
	
	We may assume without loss of generality that $x = 0$, because $
	A+B = (A+x)+(B-x)$ and both $A+x$ and $B-x$ are subsets of $\bf H$, with $|B-x| = |B|$. Therefore, $B$ is a subset of $\mathbf N$; and we can suppose that $y \ne 0$, or else the conclusion follows by Theorem \ref{thm:range&sumsets(1)}. 
	
 	Now, the statement to be proved is trivial for $\alpha = 0$ or $\alpha = 1$ (just take $A := \emptyset$ in the former case and $A := \mathbf H$ in the latter). Consequently, let $\alpha \in {]0,1[}$ and pick $h,k \in \NNb^+$ such that 
	$$
	\frac{h}{k}<\alpha<\frac{h+1}{k}
	\quad\text{and}\quad 
	h\ge 2y+1.
	$$ 
	Then $k\alpha-h \in {]0,1[}$ and $h-y-1\ge y$, and we derive from Theorem \ref{thm:range&sumsets(1)} that there exists a set $C\in \mathscr A_\infty \cap \mathrm{dom}(\mathfrak{b})$ such that $\mathfrak{b}(C) = k\alpha - h$. So, we define
	$$
	A:=(k\cdot \HHb+\llb 0,h-y-1\rrb) \cup (k\cdot C+h-y).
	$$
	Then it is straightforward that
	$$
	A \in \mathscr A_\infty 
	\quad\text{and}\quad
	A+B=(k\cdot \HHb+\llb 0,h-1\rrb) \cup (k\cdot C+h),
	$$
	and it follows by Propositions \ref{prop:basic}\ref{it:prop:basic(5)} and \ref{lem:buckondisjoint} that 
	\[
	\mathfrak{b}^\star(A+B)=\mathfrak{b}^\star(k\cdot \HHb+\llb 0,h-1\rrb)+\mathfrak{b}^\star(k\cdot C+h)=\frac{h+\mathfrak{b}(C)}{k}=\alpha.
	\]
	Likewise, we calculate that $
	\mathfrak{b}_\star(A+B) = \alpha$. Thus, $A+B \in \text{dom}(\mathfrak b)$ and $\mathfrak b(A+B) = \alpha$.
\end{proof}

\begin{theorem}\label{thm:bases}
	Given $\alpha \in [0,1]$, there exists a set $A \subseteq \bf H$ with $0 \in A$ and $\gcd(A) = 1$ such that $2A = \mathbf H$, $A \in \mathrm{dom}(\mu)$, and $\mu(A) = \alpha$ for every quasi-density $\mu$ on $\bf H$. 
\end{theorem}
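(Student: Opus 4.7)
The plan is to reduce via Proposition \ref{prop:basic}\ref{it:prop:basic(4)} to the Buck density: it suffices to construct a single set $A \subseteq \mathbf H$ with $0 \in A$, $\gcd(A) = 1$, $2A = \mathbf H$, $A \in \mathrm{dom}(\mathfrak b)$, and $\mathfrak b(A) = \alpha$, for then every quasi-density $\mu$ on $\mathbf H$ will automatically satisfy $\mu(A) = \mathfrak b(A) = \alpha$. I would take $A := \mathbf H$ when $\alpha = 1$, and split the remaining range into the generic case $\alpha \in (0,1)$ and the degenerate case $\alpha = 0$.

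For $\alpha \in (0,1)$, the strategy is to adapt the blueprint of Theorem \ref{thm:georges} so that the resulting set is also a basis of order $2$. Fix an integer $k \geq 2$ with $k\alpha \geq 1$, set $h := \lfloor k\alpha \rfloor$ (so $1 \leq h \leq k-1$), and invoke Theorem \ref{thm:range&sumsets(1)} with $n = 1$ to obtain $C \in \mathscr A_\infty$ with $\mathfrak b(C) = k\alpha - h \in [0,1)$. Then define
\[
A \,:=\, \llb 0, k-1 \rrb \,\cup\, (k \cdot \mathbf H + \llb 0, h-1 \rrb) \,\cup\, (k \cdot C + h).
\]
The last two summands sit in disjoint members of $\mathscr A$ (residues $\{0, \ldots, h-1\}$ and $\{h\}$ modulo $k$), so Proposition \ref{lem:buckondisjoint}, combined with Proposition \ref{prop:basic}, yields $A \in \mathrm{dom}(\mathfrak b)$ and $\mathfrak b(A) = h/k + \mathfrak b(C)/k = \alpha$. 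Since $\{0, 1\} \subseteq \llb 0, k-1 \rrb \subseteq A$, we have $0 \in A$ and $\gcd(A) = 1$. To verify $2A = \mathbf H$, write any $n \in \mathbf H$ as $n = qk + r$ with $q \in \mathbf H$ and $0 \leq r < k$, and set
\[
(a, b) := \begin{cases} (r,\, qk) & \text{if } 0 \leq r < h, \\ (r - h + 1,\, qk + h - 1) & \text{if } h \leq r < k. \end{cases}
\]
In either case $a + b = n$, $a \in \llb 0, k-1 \rrb \subseteq A$, and $b \in k \cdot \mathbf H + \llb 0, h-1 \rrb \subseteq A$, so $n \in 2A$.

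The degenerate case $\alpha = 0$ cannot be handled by the same recipe (the natural $h = 0$ collapses it to a finite set) and is the main difficulty. I would treat it by constructing a \emph{thin} additive basis of order $2$ for $\mathbf H$ with Buck density zero: choose nested moduli $m_1 \mid m_2 \mid \cdots$ with $m_k \to \infty$, subsets $S_k \subseteq \mathbf Z/m_k\mathbf Z$ of size $O(\sqrt{m_k})$ satisfying $S_k + S_k = \mathbf Z/m_k\mathbf Z$ and $S_{k+1} \bmod m_k \subseteq S_k$, and a partition of $\mathbf H$ into rapidly growing blocks $J_1, J_2, \ldots$; on each $J_k$, place $A$ on the residues from $S_k$ modulo $m_k$ densely enough that $(A \cap J_k) + (A \cap J_k) \supseteq J_{k+1}$. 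Adjoining $\{0, 1\}$ to $A$ then guarantees $\gcd(A) = 1$ and $2A = \mathbf H$. The compatibility $S_{k+1} \bmod m_k \subseteq S_k$ ensures that, for every $K$, all but finitely many elements of $A$ are covered by the AP-union $\bigcup_{s \in S_K} (m_K \cdot \mathbf H + s) \in \mathscr A$, of density $|S_K|/m_K = O(1/\sqrt{m_K})$; hence $\mathfrak b^\ast(A) = 0$, and $A \in \mathrm{dom}(\mathfrak b)$ with $\mathfrak b(A) = 0$ by Proposition \ref{prop:basic}\ref{it:prop:basic(1)}.

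The main obstacle is precisely this $\alpha = 0$ case: Buck-density zero is a rigid structural condition (coverability by AP-unions of arbitrarily small density) that is in visible tension with the basis requirement $2A = \mathbf H$, which classically forces $|A \cap \llb 1, N \rrb| \gtrsim \sqrt N$. Reconciling the two demands the careful simultaneous choice of the moduli, the basis sets, and the block decomposition sketched above; by contrast, the case $\alpha \in (0,1]$ is a clean adaptation of the ideas in Theorem \ref{thm:georges}, relying on the disjoint-sumset additivity from Proposition \ref{lem:buckondisjoint}.
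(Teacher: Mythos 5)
Your proposal diverges from the paper, and it has a genuine gap in the $\alpha = 0$ case. For $\alpha \in {(0,1]}$ your construction is correct: the set $A := \llb 0, k-1 \rrb \cup (k \cdot \HHb + \llb 0, h-1 \rrb) \cup (k \cdot C + h)$ with $h = \lfloor k\alpha \rfloor \ge 1$ does give $2A = \HHb$, and Propositions \ref{prop:basic} and \ref{lem:buckondisjoint} yield $\mathfrak b(A) = \alpha$ as you say. But for $\alpha = 0$ you only sketch a construction and, as you yourself flag, this is exactly where the difficulty lives. The sketch is not complete and the specific compatibility you ask for --- moduli $m_1 \mid m_2 \mid \cdots$ together with bases $S_k \subseteq \ZZb/m_k\ZZb$ of size $O(\sqrt{m_k})$ satisfying both $S_k + S_k = \ZZb/m_k\ZZb$ and the nesting $S_{k+1} \bmod m_k \subseteq S_k$ --- does not obviously admit a realization; the standard thin-basis constructions (e.g., $S_k = \llb 0, 2^k - 1\rrb \cup 2^k \cdot \llb 0, 2^k-1 \rrb$ mod $4^k$) fail the nesting condition, and you would also need to ensure that the early blocks $J_1, J_2$ are covered by $2A$ before the block mechanism kicks in, which adjoining $\{0,1\}$ alone does not accomplish. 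As it stands this is a plan, not a proof.

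The paper sidesteps all of this with a single construction that handles every $\alpha \in [0,1]$ uniformly. It sets $Q := \{x^2 + y^2 : x, y \in \NNb\}$ and $X := (Q \cup (-Q)) \cap \HHb$; by Lagrange's four-square theorem $2Q = \NNb$ (hence $2X = \HHb$), and by \cite[Theorem 4.2]{LTsmall2019} one has $\mathfrak b(Q) = 0$, so $\mathfrak b^\ast(X) = 0$. Then, taking any $Y \in \dom(\mathfrak b)$ with $\mathfrak b(Y) = \alpha$ (from Theorem \ref{thm:range&sumsets(1)}) and putting $A := X \cup Y$, Proposition \ref{prop:basic}\ref{it:prop:basic(7)} gives $A \in \dom(\mathfrak b)$ with $\mathfrak b(A) = \alpha$, while $2A \supseteq 2X = \HHb$ and $0 \in Q$, $\gcd(A) \le \gcd(Q) = 1$. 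The key insight you are missing is that ``Buck density zero'' and ``additive basis of order $2$'' are not in tension at all once you allow a set like $Q$, whose thinness is certified by the external result on sums of two squares rather than built by hand; this collapses your case split and makes the hard case trivial. If you want to avoid invoking that external theorem, you would need to complete the block construction honestly, which is considerably more work than the paper's two-line reduction.
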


\begin{proof}
Once again, it suffices to prove that there exists $A \in \mathrm{dom}(\mathfrak b)$ such that $\mathfrak b(A) = \alpha$, cf. the proofs of Theorems \ref{thm:range&sumsets(1)} and \ref{thm:georges}. To this end, set
\[
Q:=\{x^2+y^2: x,y \in \NNb\}
\quad\text{and}\quad
X := (Q \cup (-Q)) \cap \HHb.
\]
We know from Lagrange's four square theorem that $2Q=\NNb$, and from \cite[Theorem 4.2]{LTsmall2019} that $\mathfrak{b}(Q)=0$. It follows that $2X = \mathbf H$. Moreover, it is clear from the definition of $\mathfrak b^\star$ that
\[
\mathfrak b^\star((-Q) \cap \mathbf H) = \mathfrak b^\star(Q \cap (-\mathbf H)) \le \mathfrak b^\star(Q) = 0. 
\]
Therefore, we find that
\[
X \in \mathrm{dom}(\mathfrak b)
\quad\text{and}\quad
\mathfrak{b}(X) = 0.
\]
On the other hand, Theorem \ref{thm:range&sumsets(1)} guarantees that $\mathfrak{b}(Y)=\alpha$ for some $Y \in \mathrm{dom}(\mathfrak b)$. So, letting $A:=X \cup Y$ and putting all pieces together, we get from Proposition \ref{prop:basic}\ref{it:prop:basic(7)} that 
\[
2A=\HHb, 
\quad
A \in \mathrm{dom}(\mathfrak b),
\quad\text{and}\quad
\mathfrak b(A) = \alpha. 
\]
This finishes the proof, when considering that $0 \in Q \subseteq A$ and $1 \le \gcd(A) \le \gcd(Q) = 1$.
\end{proof}

\section{Closing remarks}\label{sect:closing-remarks}

Looking at the statement of Theorem \ref{thm:range&sumsets(1)}, it is natural to ask whether assuming $A \in \mathrm{dom}(\mu)$, for some fixed quasi-density $\mu$ on $\bf H$, is sufficient to guarantee that $2A \in \mathrm{dom}(\mu)$. 

By \cite[Proposition 2.2]{HHP2019}, the answer is negative for the asymptotic density $\sf d$ on $\bf N$. But it follows by \cite[Remark 3]{LT} that, in the classical framework of Zermelo-Fraenkel set theory with the axiom of choice, there is a density $\mu$ on $\HHb$ such that $\mathrm{dom}(\mu)=\HHb$; hence, in this case, the answer is positive. 

One can still wonder what happens with the Buck density $\mathfrak b$, especially in light of the role played by $\mathfrak b$ in the proofs of Sect.~\ref{sec:main-results}.
Again, the answer turns out to be in the negative. In fact, set
$$
V:=\{n! + n: n \in \NNb\}
\quad\text{and}\quad
A:=\{x^2+y^2: x,y \in V\}.
$$
Since $\mathfrak{b}^\star$ is monotone, we gather from \cite[Theorem 4.2]{LTsmall2019}, similarly as in the proof of Theorem \ref{thm:bases}, that $A \in \textrm{dom}(\mathfrak b)$ and $\mathfrak{b}(A)=0$.
However, we will show that $2A\notin \mathrm{dom}(\mathfrak{b})$. To begin, we have
$$
2A=\left\{x_1^2+x_2^2 + x_3^2 +x_4^2: x_1,x_2, x_3, x_4 \in V\right\}.
$$
Fix $k \in \NNb^+$ and $h \in \NNb$. By Lagrange's four square theorem, there exist $y_1, y_2, y_3, y_4 \in \NNb$ such that $h = \allowbreak y_1^2+y_2^2+y_3^2+y_4^2$. Set, for each $i \in \llb 1, 4 \rrb$, $n_i := (h+1)k + y_i$ and $x_i := n_i! + n_i$, and note that $x_i \in V$, $x_i \ge h$, and $n_i \ge k$. It is then easily checked that
$$
\sum_{i=1}^4 x_i^2 \equiv \sum_{i=1}^4 (n_i! \, (n_i! + 2n_i) + n_i^2) \equiv \sum_{i=1}^4 n_i^2 \equiv \sum_{i=1}^4 y_i^2 \equiv h\bmod{k}.
$$
Therefore $(k\cdot \HHb+h) \cap 2A$ is non-empty and, since $k$ and $h$ were arbitrary, we conclude that the only arithmetic progression of $\bf H$ containing $2A$ is $\mathbf H$ itself, with the result that $\mathfrak{b}^\star(2A)=1$. 

Now suppose for a contradiction that $\mathfrak{b}_\star(2A)\neq 0$. By Eq.~\eqref{equ:lower-buck-density}, this is only possible if $2A$ contains an arithmetic progression of $\HHb$, implying that there is a constant $C \in \mathbf R^+$ such that $|2A \cap [1,m]| \ge Cm$ for all large $m$. The latter is, however, a contradiction, because it is clear that
$$
|2A \cap \llb 1,m \rrb| \le |V \cap \llb 1,\sqrt{m} \,\rrb|^4 \le \sup\{n^4: n \in \mathbf N \text{ and } n! \le \sqrt{m}\}=o(m),
\quad \text{as } m \to \infty.
$$
It follows that $\mathfrak{b}_\star(2A)=0 \ne \mathfrak b^\star(2A)$, and hence $2A\notin \mathrm{dom}(\mathfrak{b})$.

\section*{Acknowledgments}

P.\,L.~is grateful to PRIN 2017 (grant 2017CY2NCA) for financial support. Both authors thank the anonymous reviewers for a careful reading of the manuscript and many suggestions that helped to improve the overall quality of the paper.

\end{document}